\def\algspacing{\alg@unmargin}
\newlength{\algorithmwidth}
\theoremstyle{plain}
\newtheorem{theorem}{Theorem}[section]
\newtheorem{corollary}[theorem]{Corollary}
\newtheorem{lemma}[theorem]{Lemma}
\theoremstyle{definition}
\theoremstyle{remark}
\numberwithin{theorem}{section}
\numberwithin{equation}{section}
\newcommand{\<}{\left\langle}
\renewcommand{\>}{\right\rangle}
\DeclareMathOperator*{\argmin}{arg min} 
\DeclareMathOperator*{\variance}{Var}
\def \P {\mathbb{P}}
\def \E {\mathbb{E}}
\def \A {\mathcal{A}}
\def \F {\mathcal{G}}
\def \R {\mathcal{R}}
\newcommand{\vct}[1]{\bm{#1}}
\newcommand{\mtx}[1]{\bm{#1}}
\newcommand{\ip}[2]{\left\langle {#1}, \, {#2} \right\rangle}
\newcommand{\norm}[1]{\lVert {#1} \rVert}
\newcommand{\enorm}[1]{\norm{#1}_2}
\newcommand{\enormsq}[1]{\enorm{#1}^2}
\def \xcur {\vct{x}_k}
\def \xnext {\vct{x}_{k+1}}
\def \xmin {\vct{x}_{\star}}
\def \ai {\vct{a}_i}
\def \bi {b_i}
\def \F {F} 
\newcommand{\gradest}{\vct{g}}
\def \x {\vct{x}}
\def \y {\vct{y}}
\def \A {\mtx{A}}
\def \D {\mtx{D}}
\def \b {\vct{b}}
\def \z {\vct{z}}
\def \e {\vct{e}}
\def \xhat {\hat{\x}}
\def \xo {\vct{x}_0}
\def \Fradg {\nabla\vct{F}}  
\newcommand{\DD}{\mathcal{D}}  % Distribution
\newcommand{\fwi}{f^{(w)}_i}
\newcommand{\Lwi}{L^{(w)}_i}
\newcommand{\Fwi}{G^{(w)}_i}
\newcommand{\supLw}{\sup L_{(w)}}
\newcommand{\wL}{w}  % 
\newcommand{\wLA}{w^{\lambda}}
\newcommand{\supLwL}{\sup L_{(\wL)}}
\newcommand{\supLwLA}{\sup L_{(\wLA)}}
\newcommand{\sigmaLwL}{\sigma_{(w)}}
  \newcommand{\sigmaLwLA}{\sigma_{(w^{\lambda})}}
\newcommand{\sqLw}{\overline{L_{(w)}^2}}
\newcommand{\sqGw}{\overline{G_{(w)}^2}}
\newcommand{\removed}[1]{{}}
\begin{document}

\title[SGD and Kaczmarz]{Stochastic Gradient Descent, Weighted Sampling, and
  the Randomized Kaczmarz algorithm}

\author[Needell, Srebro, Ward]{Deanna Needell \and\ Nathan Srebro \and\ Rachel ~Ward}

\thanks{DN: Department of Mathematical Sciences, Claremont McKenna College (dneedell@cmc.edu). NS: Toyota Technological Institute at Chicago (nati@ttic.edu). RW: Department of Mathematics, University of Texas at Austin (rward@math.utexas.edu).}
\date{\today}

\begin{abstract}
  We obtain an improved finite-sample guarantee on the linear
  convergence of stochastic gradient descent for smooth and strongly
  convex objectives, improving from a quadratic dependence on the
  conditioning $(L/\mu)^2$ (where $L$ is a bound on the smoothness and
  $\mu$ on the strong convexity) to a linear dependence on $L/\mu$.
  Furthermore, we show how reweighting the sampling distribution
  (i.e.~importance sampling) is necessary in order to further improve
  convergence, and obtain a linear dependence in the average
  smoothness, dominating previous results.  We also discuss importance
  sampling for SGD more broadly and show how it can improve
  convergence also in other scenarios.

  Our results are based on a connection we make between SGD and the
  \emph{randomized Kaczmarz algorithm}, which allows us to transfer
  ideas between the separate bodies of literature studying each of the
  two methods.    In particular, we recast the 
 randomized Kaczmarz algorithm as an instance of SGD, and apply our results to prove its exponential convergence, but to the solution of a weighted least squares problem rather than the original least squares problem.  We then present a modified Kaczmarz algorithm with partially biased sampling which does converge to the original least squares solution with the same exponential convergence rate. 
  
  \smallskip
\noindent \textbf{Keywords.} distribution
reweighting, importance sampling,  Kaczmarz method, stochastic gradient descent

\end{abstract}

\maketitle

\section{Introduction} %%%%%%%%%%%%%%%%%%%%%%%%%%%%%%%%%%%%%%%%%%%%%%%%%%%%%%%%%%%%%%
This paper connects two algorithms which until now have remained
remarkably disjoint in the literature: the randomized Kaczmarz
algorithm for solving linear systems and the stochastic gradient
descent (SGD) method for optimizing a convex objective using unbiased
gradient estimates.  The connection enables us to make contributions
by borrowing from each body of literature to the other.  In
particular, it helps us highlight the role of weighted sampling for
SGD and obtain a tighter guarantee on the linear convergence regime of
SGD.

\removed{
We extend the idea of importance sampling from the Kaczmarz
literature, and introduce an family of SGD algorithms with
preconditioning and nonuniform row selection rule.  We show that by
sampling from a hybrid uniform / biased distribution over the
estimates, the proposed family of algorithms enjoys the advantages of
both importance sampling (improved rate of convergence) and unbiased
sampling (higher noise tolerance), and overall compares favorably to
recent bounds for SGD with uniform row selection in the case of sums
of smooth and strongly convex functions.
}

Recall that stochastic gradient descent is a method for minimizing a
convex objective $\F(\x)$ based on access to unbiased stochastic
gradient estimates, i.e.~to an estimate $\gradest$ for the gradient at
a given point $\x$, such that $\E[\gradest]=\nabla \F(\x)$.  Viewing
$\F(\x)$ as an expectation $\F(\x)=\E_i[f_i(\x)]$, the unbiased
gradient estimate can be obtained by drawing $i$ and using its
gradient: $\gradest=\nabla f_i(\x)$.  SGD originated as ``Stochastic
Approximation'' in the pioneering work of \citet{robmon}, and has
recently received renewed attention for confronting very large scale
problems, especially in the context of machine learning
\citep{bousquet2007tradeoffs,shalev2008svm,njls09,bottou2010large}.
Classical analysis of SGD shows a polynomial rate on the
sub-optimality of the objective value, $\F(\xcur)-\F(\xmin)$, namely
$1/\sqrt{k}$ for non-smooth objectives, and $1/k$ for smooth, or
non-smooth but strongly convex objectives.  Such convergence can be
ensured even if the iterates $\xcur$ do not necessarily converge to a
unique optimum $\xmin$, as might be the case if $\F(\x)$ is not
strongly convex.  Here we focus on the strongly convex case, where the
optimum is unique, and on convergence of the iterates $\xcur$ to the
optimum $\xmin$.

\citet{bach2011} recently provided a non-asymptotic
bound on the convergence of the iterates in strongly convex SGD,
improving on previous results of this kind \citep[Section
2.2]{murata}\citep[Section 3.2]{bottou}\citep{shamir2012stochastic}\citep{njls09}.
In particular, they showed that if each $f_i(\x)$ is
smooth and if $\xmin$ is a minimizer of (almost) all $f_i(\x)$,
i.e.~$\P_i(\nabla f_i(\xmin)=0)=1$, then $\E\norm{\xcur-\xmin}$ goes
to zero exponentially, rather than polynomially, in $k$.  That is, reaching
a desired accuracy of $\E \norm{\xcur-\xmin}^2 \leq \varepsilon$
requires a number of steps that scales only logarithmically in
$1/\varepsilon$.  Bach and Moulines's bound on the required number of
iterations further depends on the average \emph{squared} conditioning
number $\E[(L_i/\mu)^2]$, where $L_i$ is the Lipschitz
constant of $\nabla f_i(\x)$ (i.e.~$f_i(\x)$ are ``$L$-smooth''), and
$\F(\x)$ is $\mu$-strongly convex.  If $\xmin$ is not an exact
minimizer of each $f_i(\x)$, the bound degrades gracefully as a
function of $\sigma^2=\E \norm{\nabla f_i(\xmin)}^2$, and includes an
unavoidable term that behaves as $\sigma^2/k$.

In a seemingly independent line of research, the \emph{Kaczmarz
  method} was proposed as an iterative method for solving (usually
overdetermined) systems of linear
equations~\citep{Kac37:Angenaeherte-Aufloesung}.  The simplicity of the
method makes it useful in a wide array of applications ranging from
computer tomography to digital signal
processing~\citep{herman2009fundamentals,Nat01:Mathematics-Computerized,hounsfield1973computerized}.
Recently, \citet{SV09:Randomized-Kaczmarz} proposed a variant of the
Kaczmarz method using a random selection method which select rows with
probability proportional to their squared norm, and showed that using
this selection strategy, a desired accuracy of $\varepsilon$ can be
reached in the noiseless setting in a number of steps that scales like
$ \log(1/\varepsilon)$ and {\em linearly} in the condition number.

\subsection{Importance sampling in stochastic optimization}

From a birds-eye perspective, this paper aims to extend the notion of importance sampling from stochastic sampling methods for numerical linear algebra applications, to more general stochastic convex optimization problems. 
Strohmer and Vershynin's incorporation of importance sampling into the Kaczmarz setup ~\cite{SV09:Randomized-Kaczmarz} is just one such example, and most closely related to the SGD set-up.   But importance sampling has also been considered in stochastic coordinate-descent methods~\cite{nesterov2012efficiency,richtarik2012iteration}. There also, the weights are proportional to some power of the Lipschitz constants (of the gradient coordinates).

Importance sampling has also played  a key role in designing sampling-based low-rank matrix approximation algorithms   -- both row/column based sampling and entry-wise sampling -- where it goes by the name of \emph{leverage score} sampling.  The resulting sampling methods are again proportional to the squared Euclidean norms of rows and columns of the underlying matrix.  See \cite{boutsidis2009columnselection, mahoney2011, spielman2011graphsparsify, chen2014coherent}, and references therein for applications to the column subset selection problem and matrix completion. 
See \cite{gittensmahoney, mamahoney14, wangzhang13} for applications of importance sampling to the Nystr{\"o}m Method. 

Importance sampling has also been introduced to the compressive sensing framework, where it translates to sampling rows of an orthonormal matrix proportionally to their squared inner products with the rows of a second orthonormal matrix in which the underlying signal is assumed sparse. See \cite{rauhut2012sparse, krahmer2012beyond} for more details.

\subsection{Contributions}

Inspired by the analysis of \citeauthor{SV09:Randomized-Kaczmarz} and \citeauthor{bach2011}, we prove convergence results for stochastic
gradient descent as well as for SGD variants where gradient
estimates are chosen based on a {\em weighted sampling distribution},
highlighting the role of importance sampling in SGD.

We first show (Corollary \ref{thm:SGDsteps} in Section \ref{sec:SGD})
that without perturbing the sampling distribution, we can obtain a
linear dependence on the {\em uniform conditioning} $(\sup L_i /
\mu)$, but it is {\em not possible} to obtain a linear dependence on
the {\em average conditioning} $\E[L_i/\mu]$.  This is a {\bfseries
  quadratic improvement over the previous results \citep{bach2011} in regimes where the
  components have similar Lipschitz constants}.

We then turn to importance sampling, using a weighted sampling
distribution.  We show that weighting components proportionally to
their Lipschitz constants $L_i$, as is essentially done by Strohmer
and Vershynin, can reduce the dependence on the conditioning to a
linear dependence on the average conditioning $\E[L_i/\mu]$.  However,
this comes at an increased dependence on the residual $\sigma^2$.
But, we show that by only {\em partially biasing} the sampling towards
$L_i$, we can enjoy the best of both worlds, obtaining a linear
dependence on the average conditioning $\E[L_i/\mu]$, without
amplifying the dependence on the residual. Thus, {\bfseries using
  importance sampling, we obtain a guarantee dominating, and improving
  over the previous best-known results~\citep{bach2011}} (Corollary \ref{thm:mixsteps} in
Section \ref{sec:SGD}).

In Section \ref{sec:other}, we consider the benefits of reweighted SGD
also in other scenarios and regimes.  We show how also for smooth but
not-strongly-convex objectives, {\bfseries importance sampling can
  improve a dependence on a uniform bound over smoothness, $(\sup
  L_i)$, to a dependence on the average smoothness $\E[L_i]$}---such
an improvement is not possible without importance sampling.  For
non-smooth objectives, we show that importance sampling can eliminate
a dependence on the variance in the Lipschitz constants of the
components.  In parallel work we recently became aware of,
\citet{zhao2014stochastic} also consider importance sampling for
non-smooth objectives, including composite objectives, suggesting the
same reweighting as we obtain here.

Finally, in Section \ref{sec:kz}, we turn to the Kaczmarz algorithm, explain
how it is an instantiation of SGD, and how using partially biased
sampling improves known guarantees in this context as well.  We show that the randomized Kaczmarz method with uniform i.i.d. row selection can be recast as an instance of preconditioned Stochastic Gradient Descent acting on a re-weighted least squares problem and through this connection, provide exponential convergence rates for this algorithm.  We also consider the Kaczmarz algorithm corresponding to SGD with hybrid row selection strategy which shares the exponential convergence rate of \citet{SV09:Randomized-Kaczmarz} while also sharing a small error residual term of the SGD algorithm.  This presents a clear tradeoff between convergence rate and the convergence residual, not present in other results for the method.

\section{SGD for Strongly Convex Smooth Optimization}\label{sec:SGD}

We consider the problem of minimizing a smooth convex function,
\begin{equation}\label{eq:min2}
\xmin = \argmin_{\x} \F(\x)
\end{equation}
where $\F(\x)$ is of the form $\F(\x) = \E_{i\sim\DD} f_i(\x)$ for
smooth functionals $f_i:\mathcal{H}\rightarrow\R$ over
$\mathcal{H}=\R^d$ endowed with the standard Euclidean norm
$\norm{\cdot}_2$, or over a Hilbert space $\mathcal{H}$ with the norm
$\norm{\cdot}_2$.  Here $i$ is drawn from some {\em source
  distribution} $\DD$ over an arbitrary probability space.  Throughout
this manuscript, unless explicitly specified otherwise, expectations
will be with respect to indices drawn from the source distribution
$\DD$.  That is, we write $\E f_i(\x)=\E_{i\sim\DD} f_i(\x)$.  We also
denote by $\sigma^2$ the ``residual'' quantity at the minimum,
$$
\sigma^2= \E \enormsq{\nabla f_i(\xmin)}.
$$

We will instate the following assumptions on the function $\F$:
\begin{enumerate}
\item Each $f_i$ is continuously differentiable and the gradient
  function $\nabla f_i$ has Lipschitz constant $L_i$; that is,
  $\enorm{\nabla f_i(\x) - \nabla f_i(\y)} \leq L_i\enorm{\x-\y}$ for all
  vectors $\x$ and $\y$.\vspace{2mm}
\item $\F$ has strong convexity parameter $\mu$; that is, $\<\x-\y, \Fradg(\x) - \Fradg(\y)\> \geq \mu\enormsq{\x-\y}$ for all vectors $\x$ and $\y$. 
\end{enumerate}
\removed{Note in particular that the strong convexity assumption ensures that the minimum of \eqref{eq:min2} is unique.}

We denote $\sup L$ the supremum of the support of $L_i$, i.e.~the
smallest $L$ such that $L_i \leq L$ a.s., and similarly denote $\inf
L$ the infimum.  We denote the average Lipschitz constant as $\overline{L}=\E L_i$.

A unbiased gradient estimate for $F(\x)$ can be obtained by
drawing $i\sim\DD$ and using $\nabla f_i(\x)$ as the estimate.  The SGD
updates with (fixed) step size $\gamma$ based on these gradient
estimates are then given by:
\begin{equation}
  \label{eq:SGD}
  \x_{k+1} \leftarrow \x_k - \gamma \nabla f_{i_k}(\x_k)
\end{equation}
where $\{i_k\}$ are drawn i.i.d.~from $\DD$.  We are interested in the
distance $\enormsq{\xcur-\xmin}$ of the iterates from the unique
minimum, and denote the initial distance by $\varepsilon_0 = \enormsq{\x_0-\xmin}$.

\citet[Theorem 1]{bach2011} considered this
setting\footnote{Bach and Moulines's results are somewhat more
  general.  Their Lipschitz requirement is a bit weaker and more
  complicated, but in terms of $L_i$ yields \eqref{eq:bmbound}.  They
  also study the use of polynomial decaying step-sizes, but these do
  not lead to improved runtime if the target accuracy is known ahead
  of time. } and established that
\begin{equation}\label{eq:bmbound}
k = 2\log(\varepsilon/\varepsilon_0) \Big(\frac{\E L_i^2}{\mu^2} + \frac{\sigma^2}{\mu^2 \varepsilon} \Big) 
\end{equation}
SGD iterations of the form \eqref{eq:SGD}, with an appropriate
step-size, are sufficient to ensure
$\E\enormsq{\xcur-\xmin}\leq\varepsilon$, where the expectations is
over the random sampling.  As long as $\sigma^2=0$, i.e.~the same minimizer
$\xmin$ minimizes all components $f_i(\x)$ (though of course it need
not be a unique minimizer of any of them), this yields linear
convergence to $\xmin$, with a graceful degradation as
$\sigma^2>0$.  However, in the linear convergence regime, the number of
required iterations scales with the expected \emph{squared} conditioning
$\E L_i^2/\mu^2$.  In this paper, we reduce this quadratic dependence to a linear
dependence.  We begin with a guarantee ensuring linear dependence,
though with a dependence on $\sup L/\mu$ rather than $E L_i/\mu$:

\begin{theorem}\label{thm:gen}
  Let each $f_i$ be convex where $\nabla f_i$ has Lipschitz constant
  $L_i$, with $L_i\leq \sup L$ a.s., and let $\F(\x)=\E f_i(\x)$ be
  $\mu$-strongly convex.  Set $\sigma^2 = \E
  \enormsq{\nabla f_i(\xmin)}$, where $\xmin = \argmin_{\x} \F(\x)$.
 Suppose that $\gamma < \frac{1}{ \sup L}$.  Then the SGD iterates given by \eqref{eq:SGD} satisfy:
\begin{equation}
\label{recursion}
\E\enormsq{\xcur - \xmin} \leq \left[ 1 - 2\gamma \mu(1 - \gamma \sup
  L) \Big) \right]^k\enormsq{\xo - \xmin} + \frac{\gamma
  \sigma^2}{\mu\big( 1 - \gamma \sup L \big)},
\end{equation}
where the expectation is with respect to the sampling of $\{i_k\}$. 
\end{theorem}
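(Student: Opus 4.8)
The plan is to establish a single-step recursion of the form
\[
\E\!\left[\enormsq{\xnext - \xmin}\mid \xcur\right] \leq \big(1 - 2\gamma\mu(1-\gamma\sup L)\big)\,\enormsq{\xcur-\xmin} + 2\gamma^2\sigma^2,
\]
and then unroll it as a geometric recursion. First I would expand the update \eqref{eq:SGD},
\[
\enormsq{\xnext - \xmin} = \enormsq{\xcur - \xmin} - 2\gamma\,\ip{\nabla f_{i_k}(\xcur)}{\xcur - \xmin} + \gamma^2\enormsq{\nabla f_{i_k}(\xcur)},
\]
and take the expectation over $i_k$ conditioned on $\xcur$. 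Unbiasedness, $\E[\nabla f_{i_k}(\xcur)\mid\xcur]=\nabla\F(\xcur)$, turns the cross term into $-2\gamma\,\ip{\nabla\F(\xcur)}{\xcur-\xmin}$, leaving the second moment $\gamma^2\,\E\enormsq{\nabla f_{i_k}(\xcur)}$ as the only remaining quantity to control.

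The cross term is easy: since $\xmin$ minimizes $\F$ we have $\nabla\F(\xmin)=0$, so strong convexity gives $\ip{\nabla\F(\xcur)}{\xcur-\xmin}\geq\mu\enormsq{\xcur-\xmin}$. The delicate step is the second moment. I would split $\nabla f_i(\xcur)=(\nabla f_i(\xcur)-\nabla f_i(\xmin))+\nabla f_i(\xmin)$, apply the elementary bound $\enormsq{\vct{u}+\vct{v}}\le 2\enormsq{\vct{u}}+2\enormsq{\vct{v}}$, and use $\E\enormsq{\nabla f_i(\xmin)}=\sigma^2$. For the remaining term I would invoke co-coercivity of a convex $L_i$-smooth function, $\enormsq{\nabla f_i(\xcur)-\nabla f_i(\xmin)}\leq L_i\,\ip{\nabla f_i(\xcur)-\nabla f_i(\xmin)}{\xcur-\xmin}$; because that inner product is nonnegative (monotonicity of the gradient of a convex function), I may replace $L_i$ by $\sup L$ and then take expectations, using $\E\nabla f_i(\xmin)=\nabla\F(\xmin)=0$ once more to obtain $\E\enormsq{\nabla f_i(\xcur)-\nabla f_i(\xmin)}\leq \sup L\,\ip{\nabla\F(\xcur)}{\xcur-\xmin}$. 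Altogether this gives $\E\enormsq{\nabla f_{i_k}(\xcur)}\leq 2\sup L\,\ip{\nabla\F(\xcur)}{\xcur-\xmin}+2\sigma^2$.

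Substituting this estimate collects the two inner-product contributions into the single term $-2\gamma(1-\gamma\sup L)\,\ip{\nabla\F(\xcur)}{\xcur-\xmin}$. Here the hypothesis $\gamma<1/\sup L$ is used: it makes $1-\gamma\sup L>0$, so this is a negative multiple of a nonnegative quantity, and applying strong convexity to it yields the one-step bound above. Taking total expectations and iterating, the accumulated residual sums to $2\gamma^2\sigma^2\sum_{j\geq0}\big(1-2\gamma\mu(1-\gamma\sup L)\big)^{j} = \gamma\sigma^2/\big(\mu(1-\gamma\sup L)\big)$, which together with the geometric decay of the initial error produces exactly \eqref{recursion}.

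The main obstacle is the second-moment estimate. The naive Lipschitz bound $\enormsq{\nabla f_i(\xcur)}\leq L_i^2\enormsq{\xcur-\xmin}$ spends two powers of $L_i$ and is precisely what forces the quadratic dependence $\E L_i^2/\mu^2$ in \eqref{eq:bmbound}. The improvement to a linear dependence rests on using co-coercivity instead, which spends only one power of $\sup L$ on the inner product—later absorbed by strong convexity—rather than two on the squared distance. Keeping the inner product intact until the final step, rather than bounding it prematurely, is what lets the two inner-product terms combine with the favorable sign.
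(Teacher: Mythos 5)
Your proposal is correct and follows essentially the same route as the paper's proof: the same splitting of $\nabla f_i(\xcur)$ around $\nabla f_i(\xmin)$, the bound $\enormsq{\vct{u}+\vct{v}}\le 2\enormsq{\vct{u}}+2\enormsq{\vct{v}}$, the Co-coercivity Lemma with $L_i$ replaced by $\sup L$ via nonnegativity of the inner product, strong convexity applied to the combined cross term, and the geometric-series unrolling. The only difference is cosmetic (you take the conditional expectation before bounding the second moment, while the paper bounds pointwise first), so nothing further is needed.
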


If we are given a desired tolerance, $ \enormsq{\x - \xmin} \leq
\varepsilon$, and we know the Lipschitz constants and parameters of
strong convexity, we may optimize the step-size $\gamma$, and obtain:

\begin{corollary}\label{thm:SGDsteps} For any desired $\varepsilon$,
  using a step-size of
$$ \gamma = \frac{\mu \varepsilon}{2 \varepsilon \mu \sup L + 2
  \sigma^2}$$
we have that after
{
\begin{equation}\label{eq:cor1}
k = 2\log(2\varepsilon_0/\varepsilon) \Big(\frac{\sup L}{\mu} + \frac{\sigma^2}{\mu^2 \varepsilon} \Big) 
\end{equation}
}
SGD iterations, $\E\enormsq{\xcur - \xmin} \leq \varepsilon$, where $\varepsilon_0 = \enormsq{\x_0 - \xmin}$ and where the expectation is with respect to the sampling of $\{i_k\}$.
\end{corollary}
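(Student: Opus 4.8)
The plan is to substitute the prescribed step-size into the recursion \eqref{recursion} of Theorem~\ref{thm:gen} and to bound its two summands separately, splitting the target accuracy so that each contributes at most $\varepsilon/2$. Concretely, I will show that the residual (steady-state) term is at most $\varepsilon/2$ for \emph{every} $k$, while the geometric (transient) term falls below $\varepsilon/2$ once $k$ reaches the stated value; summing the two bounds then yields $\E\enormsq{\xcur-\xmin}\le\varepsilon$.

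First I would evaluate $1-\gamma\sup L$ at $\gamma=\mu\varepsilon/(2\varepsilon\mu\sup L+2\sigma^2)$; a one-line computation gives $1-\gamma\sup L=(\varepsilon\mu\sup L+2\sigma^2)/(2\varepsilon\mu\sup L+2\sigma^2)$, which in particular shows $\gamma\sup L\le 1/2<1$, so that the hypothesis $\gamma<1/\sup L$ of Theorem~\ref{thm:gen} holds and the recursion is available. Plugging this into the residual term $\gamma\sigma^2/\big(\mu(1-\gamma\sup L)\big)$, the common denominators cancel and it collapses to $\varepsilon\sigma^2/(\varepsilon\mu\sup L+2\sigma^2)$; since $\varepsilon\mu\sup L\ge0$ this is at most $\varepsilon/2$, and crucially the bound is independent of $k$.

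Next I would treat the transient term. Writing $\rho=2\gamma\mu(1-\gamma\sup L)$ for the contraction factor, the term equals $(1-\rho)^k\varepsilon_0$, and the same substitution gives $\rho^{-1}=2(\varepsilon\mu\sup L+\sigma^2)^2/\big(\mu^2\varepsilon(\varepsilon\mu\sup L+2\sigma^2)\big)$. The single inequality with any content is the lower bound $\varepsilon\mu\sup L+2\sigma^2\ge\varepsilon\mu\sup L+\sigma^2$ on the denominator (that is, $\sigma^2\ge0$), which yields $\rho^{-1}\le 2(\varepsilon\mu\sup L+\sigma^2)/(\mu^2\varepsilon)=2\big(\sup L/\mu+\sigma^2/(\mu^2\varepsilon)\big)$. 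Invoking $(1-\rho)^k\le e^{-\rho k}$, the requirement $e^{-\rho k}\varepsilon_0\le\varepsilon/2$ is equivalent to $k\ge\rho^{-1}\log(2\varepsilon_0/\varepsilon)$, and the bound just obtained on $\rho^{-1}$ shows the $k$ named in \eqref{eq:cor1} is large enough.

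The argument is driven entirely by bookkeeping rather than by any deep step: the two controlling inequalities are instances of trivial nonnegativity ($\varepsilon\mu\sup L\ge0$ for the residual, $\sigma^2\ge0$ for the transient term). The only point requiring a word of care is verifying that $\rho\in(0,1]$, so that $(1-\rho)^k\le e^{-\rho k}$ may legitimately be applied; this follows from $\gamma\sup L\le1/2$ together with $\sup L\ge\mu$ (the latter holding because $\F=\E f_i$ is both $\mu$-strongly convex and $\sup L$-smooth). I expect the main obstacle, such as it is, to be simply keeping the algebra tidy while tracking the two fractions.
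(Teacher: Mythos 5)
Your proposal is correct and follows essentially the same route as the paper's proof: substitute the prescribed $\gamma$ into \eqref{recursion}, show the residual term is at most $\varepsilon/2$, and bound the transient term via $1-x\le e^{-x}$ (the paper phrases this as $-1/\log(1-x)\le 1/x$) together with the same simplification $\mu\varepsilon\sup L+2\sigma^2\ge\mu\varepsilon\sup L+\sigma^2$. Your extra checks that $\gamma\sup L\le 1/2$ and that the contraction factor lies in $(0,1]$ are sound and, if anything, slightly more careful than the published argument.
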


\begin{proof}
Substituting $\gamma = \frac{\mu \varepsilon}{2 \varepsilon \mu \sup L + 2\sigma^2}$ into the second term of \eqref{recursion} and simplifying gives the bound
  $$
  \frac{\gamma \sigma^2}{\mu\big( 1 - \gamma \sup L \big)} \leq \varepsilon/2.
  $$
	{
 Now asking that
  $$
  \left[ 1 - 2\gamma \mu(1 - \gamma \sup
  L) \Big) \right]^k \varepsilon_0  \leq \varepsilon/2,
  $$
  substituting for $\gamma$, and rearranging to solve for $k$, shows that we need $k$ such that
	$$
	k\log\left(1 - \frac{\mu^2\varepsilon(\mu\varepsilon\sup L + 2\sigma^2)}{2(\mu\varepsilon\sup L + \sigma^2)^2} \right) \leq -\log\left(\frac{2\varepsilon_0}{\varepsilon}\right).
	$$
	Utilizing the fact that $-1/\log (1-x) \leq 1/x$ for $0 < x \leq 1$ and rearranging again yields the requirement that 
	$$
	k \geq \log\left(\frac{2\varepsilon_0}{\varepsilon}\right)\cdot\frac{2(\mu\varepsilon\sup L + \sigma^2)^2}{\mu^2\varepsilon(\mu\varepsilon\sup L + 2\sigma^2)}.
	$$
	Noting that this inequality holds when $k \geq 2\log\left(\frac{2\varepsilon_0}{\varepsilon}\right)\cdot\frac{\mu\varepsilon\sup L + \sigma^2}{\mu^2\varepsilon}$ yields
	the stated number of steps $k$ in \eqref{eq:cor1}.  Since the expression on the right hand side of \eqref{recursion} decreases with $k$, the corollary is proven.
	}
  \end{proof}

\paragraph{\bf Proof sketch.} The crux of the improvement over Bach and
Moulines is in a tighter recursive equation.  Bach and
Moulines rely on the recursion
$$
\enormsq{\xnext -\xmin} \leq  \left(1 - 2 \gamma \mu + 2 \gamma^2 L_i^2 \right)\enormsq{\xcur - \xmin} + 2\gamma^2\sigma^2,
$$
whereas we use the Co-Coercivity Lemma~\ref{cocoercivity}, with which we can obtain the recursion
$$
\enormsq{\xnext -\xmin} \leq  \left(1 - 2 \gamma \mu + 2\gamma^2 \mu L_i\right)\enormsq{\xcur - \xmin} + 2\gamma^2\sigma^2,
$$
where $L_i$ is the Lipschitz constant of the component used in the
current iterate.  The significant difference is that one of the
factors of $L_i$ (an upper bound on the second derivative), in the third
term inside the parenthesis, is replaced by $\mu$ (a lower bound on
the second derivative of $F$).  A complete proof can be found in the appendix.

\bigskip

\paragraph{\bf Comparison to results of~\citeauthor{bach2011}.} Our bound \eqref{eq:cor1}
replaces the dependence on the average {\em square}
conditioning $(\E L_i^2/\mu^2)$ with a linear dependence on the {\em
  uniform} conditioning $(\sup L/\mu)$.  When all Lipschitz constants
$L_i$ are of similar magnitude, this is a quadratic improvement in the
number of required iterations.  However, when different components
$f_i$ have widely different scaling, i.e.~$L_i$ are highly variable,
the supremum might be larger then the average square
conditioning.

\bigskip

\paragraph{\bf Tightness.} Considering the above, one might hope to
obtain a linear dependence on the average conditioning
$\overline{L}/\mu = \E L_i/\mu$.  However, as the following example
shows, this is not possible.  Consider a uniform source distribution
over $N+1$ quadratics, with the first quadratic $f_1$ being
$\frac{N}{2} (\x[1]-b)^2$ and all others being $\frac{1}{2}\x[2]^2$,
and $b = \pm 1$.  Any method must examine $f_1$ in order to recover
$\x$ to within error less then one, but by uniformly sampling indices
$i$, this takes $(N+1)$ iterations in expectation.  It is easy to
verify that in this case, $\sup L_i=L_1=N$,
$\overline{L}=2\frac{N}{N+1}<2$ $\E L^2_i =N,$ and
$\mu=\frac{N}{N+1}$.  For large $N$, a linear dependence on
$\overline{L}/\mu$ would mean that a constant number of iterations
suffice (as $\overline{L}/\mu=2$), but we just saw that {\em any}
method that sampled $i$ uniformly must consider at least $(N+1)$
samples in expectation to get non-trivial error.  Note that both $\sup
L_i/\mu=N+1$ and {$\E L^2_i/\mu^2 \simeq N+1$} indeed correspond to the correct
number of iterations required by SGD.

We therefore see that the choice between a dependence on the average
{\em quadratic} conditioning $\E L^2_i/\mu^2$, or a linear dependence
on the {\em uniform} conditioning $\sup L/\mu$, is unavoidable.  A
linear dependence on the average conditioning $\overline{L}/\mu$ is
not possible with any method that samples from the source distribution
$\DD$.  In the next Section, we will show how we {\em can} obtain a
linear dependence on the average conditioning $\overline{L}/\mu$,
using {\em importance sampling}, i.e.~by sampling from a modified
distribution.

\section{Importance Sampling}\label{sec:importance}

We will now consider stochastic gradient descent, where gradient estimates are sampled from a {\em
  weighted distribution}.  

\subsection{Reweighting a Distribution} 

For a weight function $w(i)$ which assigns a non-negative weight
$w(i)\geq 0$ to each index $i$, the weighted distribution $\DD^{(w)}$
is defined as the distribution such that
$$\mathbb{P}_{\DD^{(w)}}\left(I \right) \propto
\E_{i\!\sim\DD}\left[{\mathbbm{1}_I(i)w(i)}\right],$$ where $I$ is an
event (subset of indices) and $\mathbbm{1}_I(\cdot)$ its indicator
function.  For a discrete distribution $\DD$ with probability mass
function $p(i)$ this corresponds to weighting the probabilities to
obtain a new probability mass function:
$$p^{(w)}(i) \propto w(i) p(i).$$  Similarly, for a continuous
distribution, this corresponds to multiplying the density by $w(i)$
and renormalizing.

One way to construct the weighted distribution $\DD^{(w)}$, and sample
from it, is through \emph{rejection sampling}: sample $i\sim\DD$, and accept 
with probability $w(i)/W$, for some $W\geq \sup_i w(i)$.  Otherwise,
reject and continue to re-sample until a suggestion $i$ is
accepted.  The accepted samples are then distributed according to
$\DD^{(w)}$.

We use $\E^{(w)}[\cdot]=E_{i\sim\DD^{(w)}}[\cdot]$ to denote an
expectation where indices are sampled from the weighted distribution
$\DD^{(w)}$.  An important property of such an expectation is that for
any quantity $X(i)$ that depends on $i$:
\begin{equation}
  \label{eq:Ew}
  \E^{(w)}\left[\tfrac{1}{w(i)}X(i)\right] = \E\left[X(i)\right] /
  \E\left[w(i)\right],
\end{equation}
where recall that the expectations on the r.h.s.~are with respect to
$i\sim\DD$.  In particular, when $\E[w(i)]=1$, we have that
$\E^{(w)}\left[\frac{1}{w(i)}X(i)\right] = \E X(i)$.  In fact, we will
consider only weights s.t.~$\E[w(i)]=1$, and refer to such weights as
{\em normalized}.

\subsection{Reweighted SGD}

For any normalized weight function $w(i)$, we can weight each
component $f_i$, defining:
\begin{equation}
  \label{eq:fwi}
  \fwi(\x) = \frac{1}{w(i)}f_i(\x)
\end{equation}
and obtain
\begin{equation}
  \label{eq:Fbyfw}
  F(\x) = \E^{(w)}[\fwi(\x)].
\end{equation}
The representation \eqref{eq:Fbyfw} is an equivalent, and equally
valid, stochastic representation of the objective $F(\x)$, and we can
just as well base SGD on this representation.  In this case, at each
iteration we sample $i \sim \DD^{(w)}$ and then use $\nabla \fwi(\x) =
\frac{1}{w(i)}\nabla f_i(\x)$ as an unbiased gradient estimate.  SGD
iterates based on the representation \eqref{eq:Fbyfw}, which we will
also refer to as $w$-weighted SGD, are then given by
\begin{equation}
  \label{eq:SGDw}
  \x_{k+1} \leftarrow \x_k - \frac{\gamma}{w(i_k)} \nabla f_{i_k}(\x_k)
\end{equation}
where $\{i_k\}$ are drawn i.i.d.~from $\DD^{(w)}$.  

The important observation here is that all SGD guarantees are equally
valid for the $w$-weighted updates \eqref{eq:SGDw}--the objective is
the same objective $F(\x)$, the sub-optimality is the same, and the
minimizer $\xmin$ is the same.  We do need, however, to calculate the
relevant quantities controlling SGD convergence with respect to the
modified components $\fwi$ and the weighted distribution $\DD^{(w)}$.

\subsection{Strongly Convex Smooth Optimization using Weighted SGD}\label{sec:weightedSGD}

We now return to the analysis of strongly convex smooth optimization 
and investigate how re-weighting can yield a better
guarantee.  To do so, we must analyze the relevant quantities
involved.

The Lipschitz constant $\Lwi$ of each component $\fwi$ is now scaled,
and we have, $\Lwi = \frac{1}{w(i)} L_i$.  The supremum is given
by:
\begin{equation}
  \label{eq:supLw}
  \supLw = \sup_i \Lwi = \sup_i \frac{L_i}{w(i)}.
\end{equation}
It is easy to verify that \eqref{eq:supLw} is minimized by the weights
\begin{equation}
  \label{eq:wL}
  \wL(i) = \frac{L_i}{\overline{L}},
\end{equation}
and that with this choice of weights 
\begin{equation}
  \label{eq:optsupLw}
  \supLwL = \sup_i \frac{L_i}{L_i/\overline{L}} = \overline{L}.
\end{equation}
Note that the average Lipschitz constant
$\overline{L}=\E[L_i]=\E^{(w)}[\Lwi]$ is invariant under weightings.

Before applying Corollary \ref{thm:SGDsteps}, we must also calculate:
\begin{align}
  \sigmaLwL^2 &= \E^{(w)}[ \enormsq{\nabla \fwi(\xmin)} ] = \E^{(w)}[
  \frac{1}{w(i)^2} \enormsq{\nabla
    f_i(\xmin)}] \label{eq:sigmaLwL} \\
  &= \E[ \frac{1}{w(i)} \enormsq{\nabla f_i(\xmin)}] = \E[
  \frac{\overline{L}}{L_i} \enormsq{\nabla f_i(\xmin)}] \leq
  \frac{\overline{L}}{\inf L} \sigma^2. \notag  
\end{align}

Now, applying Corollary \ref{thm:SGDsteps} to the $\wL$-weighted SGD
iterates \eqref{eq:SGDw} with weights \eqref{eq:wL}, we have that,
with an appropriate stepsize,
{
\begin{align}
  k &= 2\log(2\varepsilon_0/\varepsilon) \Big(\frac{\supLwL}{\mu} +
  \frac{\sigmaLwL^2}{\mu^2 \varepsilon} \Big)  \label{eq:wLk} \\
&\leq 2\log(2\varepsilon_0/\varepsilon) \Big(\frac{\overline{L}}{\mu} +
\frac{\overline{L}}{\inf L} \cdot \frac{\sigma^2}{\mu^2 \varepsilon} \Big)  \notag
\end{align}
}
iterations are sufficient for $\E^{(w)} \enormsq{\xcur - \xmin} \leq
\varepsilon$, where $\xmin,\mu$ and $\varepsilon_0$ are exactly as in
Corollary \ref{thm:SGDsteps}.

\subsection{Partially biased sampling}

If $\sigma^2=0$, i.e.~we are in the ``realizable'' situation, with
true linear convergence, then we also have $\sigmaLwL^2=0$.  In this
case, we already obtain the desired guarantee: linear convergence with
a linear dependence on the average conditioning $\overline{L}/\mu$,
strictly improving over Bach and Moulines.  However, the inequality in
\eqref{eq:sigmaLwL} might be tight in the presence of components with
very small $L_i$ that contribute towards the residual error (as might
well be the case for a small component).  When $\sigma^2>0$, we
therefore get a dissatisfying scaling of the second term, relative to
Bach and Moulines, by a factor of $\overline{L}/{\inf L}$.

Fortunately, we can easily overcome this factor.  To do so, consider
sampling from a distribution which is a \emph{mixture} of the original source
distribution and its re-weighting using the weights \eqref{eq:wL}.
That is, sampling using the weights:
\begin{equation}
  \label{eq:wLmixed}
  \wL(i) = \frac{1}{2} + \frac{1}{2} \cdot \frac{L_i}{\overline{L}}.
\end{equation}
We refer to this as {\em partially biased sampling}.  Using these weights, we have
\begin{equation}
  \label{eq:supLwLmix}
  \supLwL = \sup_i \frac{1}{\frac{1}{2} + \frac{1}{2} \cdot
    \frac{L_i}{\overline{L}}} L_i \leq 2 \overline{L}
\end{equation}
and
\begin{equation}
  \label{eq:sigmaLwLmix}
  \sigmaLwL^2 = \E[ \frac{1}{\frac{1}{2} + \frac{1}{2} \cdot
    \frac{L_i}{\overline{L}}} \enormsq{\nabla f_i(\xmin)}] \leq 2
  \sigma^2.
\end{equation}
Plugging these into Corollary \ref{thm:SGDsteps} we obtain:
\begin{corollary}\label{thm:mixsteps} Let each $f_i$ be convex where
  $\nabla f_i$ has Lipschitz constant $L_i$ and let
  $\F(\x)=\E_{i\sim\DD}[f_i(\x)]$, where $\F(\x)$ is $\mu$-strongly
  convex.  Set $\sigma^2 = \E \enormsq{\nabla f_i(\xmin)}$, where $\xmin
  = \argmin_{\x} \F(\x)$. For any desired $\varepsilon$,
  using a stepsize of
$$ \gamma = \frac{\mu \varepsilon}{4(\varepsilon \mu \overline{L} + \sigma^2)}$$
we have that after
{
\begin{equation}%\label{eq:cor1mix}
k = 4\log(2\varepsilon_0/\varepsilon) \Big(\frac{\overline{L}}{\mu} + \frac{\sigma^2}{\mu^2 \varepsilon} \Big) 
\end{equation}
}
iterations of $w$-weighted SGD \eqref{eq:SGDw} with weights specified
by \eqref{eq:wLmixed}, $\E^{(w)}\enormsq{\xcur - \xmin} \leq \varepsilon$,
where $\varepsilon_0 = \enormsq{\x_0 - \xmin}$ and $\overline{L}=\E L_i$.
\end{corollary}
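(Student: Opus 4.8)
The plan is to observe that $w$-weighted SGD with the partially biased weights \eqref{eq:wLmixed} is nothing more than ordinary SGD run on the equivalent reweighted representation $F(\x)=\E^{(w)}[\fwi(\x)]$ of \eqref{eq:Fbyfw}, so that Corollary \ref{thm:SGDsteps} applies directly once I substitute the constants appropriate to the modified components $\fwi=\frac{1}{w(i)}f_i$. The objective $F$, its minimizer $\xmin$, the sub-optimality $\enormsq{\xcur-\xmin}$, and the strong convexity parameter $\mu$ are all left unchanged by reweighting (strong convexity is a property of the fixed function $F$), so the only quantities I must recompute are the uniform smoothness $\supLwL$ and the residual $\sigmaLwL^2$ of the weighted problem.

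First I would check that the weights \eqref{eq:wLmixed} are normalized, $\E[w(i)]=\frac12+\frac12\,\E[L_i]/\overline{L}=1$, which is what makes \eqref{eq:Fbyfw} a valid stochastic representation of $F$ and puts the reweighting identity \eqref{eq:Ew} into the clean form $\E^{(w)}[\frac{1}{w(i)}X(i)]=\E X(i)$. Next I would record the two bounds \eqref{eq:supLwLmix} and \eqref{eq:sigmaLwLmix}. Since $w(i)=\frac12(1+L_i/\overline{L})\ge \frac12\,L_i/\overline{L}$, the scaled Lipschitz constant obeys $\Lwi=L_i/w(i)\le 2\overline{L}$, hence $\supLwL\le 2\overline{L}$; and since $w(i)\ge\frac12$, applying \eqref{eq:Ew} to \eqref{eq:sigmaLwL} gives $\sigmaLwL^2=\E[\frac{1}{w(i)}\enormsq{\nabla f_i(\xmin)}]\le 2\,\E\enormsq{\nabla f_i(\xmin)}=2\sigma^2$.

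Finally I would feed these into Corollary \ref{thm:SGDsteps}. Applied to the weighted problem, that corollary yields accuracy $\varepsilon$ with step-size $\frac{\mu\varepsilon}{2\varepsilon\mu\supLwL+2\sigmaLwL^2}$ after $2\log(2\varepsilon_0/\varepsilon)(\supLwL/\mu+\sigmaLwL^2/(\mu^2\varepsilon))$ iterations; substituting the upper bounds $\supLwL\le 2\overline{L}$ and $\sigmaLwL^2\le 2\sigma^2$ turns the step-size into $\frac{\mu\varepsilon}{4(\varepsilon\mu\overline{L}+\sigma^2)}$ and the iteration count into $4\log(2\varepsilon_0/\varepsilon)(\overline{L}/\mu+\sigma^2/(\mu^2\varepsilon))$, exactly as claimed. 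The one point that deserves care---and the closest thing to an obstacle in an otherwise routine assembly---is justifying the replacement of the exact weighted constants by these upper bounds inside the step-size and iteration formulas. I would note that the recursion \eqref{recursion} underlying Corollary \ref{thm:SGDsteps} remains a valid (merely weaker) inequality when $\supLwL$ is replaced by any a.s.\ upper bound on the Lipschitz constants and $\sigmaLwL^2$ by any upper bound on the residual, because both the contraction factor $1-2\gamma\mu(1-\gamma\supLwL)$ and the residual term $\gamma\sigmaLwL^2/(\mu(1-\gamma\supLwL))$ are monotonically increasing in these arguments. In particular the chosen step-size satisfies $\gamma<1/(2\overline{L})\le 1/\supLwL$, so the hypothesis $\gamma<1/\supLwL$ of Theorem \ref{thm:gen} holds for the weighted problem, and the bound from the proof of Corollary \ref{thm:SGDsteps} carries through verbatim with $(2\overline{L},2\sigma^2)$ in place of $(\sup L,\sigma^2)$.
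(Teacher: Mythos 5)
Your proposal is correct and follows the same route as the paper: verify the weights \eqref{eq:wLmixed} are normalized, establish the bounds $\supLwL\le 2\overline{L}$ and $\sigmaLwL^2\le 2\sigma^2$ as in \eqref{eq:supLwLmix}--\eqref{eq:sigmaLwLmix}, and plug these into Corollary \ref{thm:SGDsteps} applied to the reweighted representation \eqref{eq:Fbyfw}. Your explicit justification that the recursion \eqref{recursion} is monotone in $\sup L$ and $\sigma^2$, so that upper bounds may be substituted, is a point the paper leaves implicit but is exactly the right thing to check.
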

We now obtain the desired linear scaling on $\overline{L}/\mu$,
without introducing any additional factor to the residual term, except
for a constant factor of two.  We thus obtain a result which dominates
Bach and Moulines (up to a factor of 2) and substantially improves
upon it (with a linear rather than quadratic dependence on the
conditioning).

One might also ask whether the previous best known result
\eqref{eq:bmbound} could be improved using weighted sampling.  The
relevant quantity to consider is the average square Lipschitz constant
for the weighted representation: \eqref{eq:Fbyfw}:
\begin{align}
  \sqLw \doteq &\E^{(w)}\left[\left(\Lwi\right)^2\right] = \E^{(w)}[
    \frac{L^2i}{w(i)^2}] = \E[\frac{L_i^2}{w(i)}]. 
\end{align}
Interestingly, this quantity is minimized by the same weights as
$\supLw$, given by \eqref{eq:wL}, and with these weights we have:
\begin{align}
  \sqLw &= \E[\frac{L_i^2}{L_i/\overline{L}}] = \overline{L}\E L_i = \overline{L}^2. 
\end{align}
Again, we can use the partially biased weights give in \eqref{eq:wLmixed}, which
yields $\sqLw \leq 2 \overline{L}^2$ and also ensures $\sigmaLwL^2\leq
2 \sigma^2$.  In any case, we get a dependence on $\overline{L}^2 =
(\E L_i)^2 \leq \E[L_i^2]$ instead of $\overline{L^2}=\E[L_i^2]$,
which is indeed an improvement.  Thus, the Bach and Moulines guarantee
is also improved by using biased sampling, and in particular the
partially biased sampling specified by the weights \eqref{eq:wLmixed}.
However, relying on Bach and Moulines we still have a quadratic
dependence on $(\overline{L}/\mu)^2$, as opposed to the linear
dependence we obtain in Corollary \ref{thm:mixsteps}.

\subsection{Implementing Importance Sampling}

As discussed above, when the magnitudes of $L_i$ are highly variable,
importance sampling is necessarily in order to obtain a dependence on
the average, rather than worst-case, conditioning.  In some applications,
especially when the Lipschitz constants are known in advance or easily
calculated or bounded, such importance sampling might be possible by
directly sampling from $D^{(w)}$.  This is the case, for example, in
trigonometric approximation problems or linear systems which need to
be solved repeatedly, or when the Lipschitz constant is easily
computed from the data, and multiple passes over the data are needed
anyway.  We do acknowledge that in other regimes, when data is
presented in an online fashion, or when we only have sampling access
to the source distribution $\DD$ (or the implied distribution over
gradient estimates), importance sampling might be difficult.

One option that could be considered, in light of the above results, is
to use rejection sampling to simulate sampling from $\DD^{(w)}$.
For the weights \eqref{eq:wL}, this can be done by accepting
samples with probability proportional to $L_i/\sup L$.  The overall
probability of accepting a sample is then $\overline{L}/\sup L$,
introducing an additional factor of $\sup L/\overline{L}$.  This
results in a sample complexity with a linear dependence on $\sup L$,
as in Corollary \ref{thm:SGDsteps}
(for the weights \eqref{eq:wLmixed}, we can first accept with
probability 1/2, and then if we do not accept, perform this
procedure).  Thus, if we are presented samples from $\DD$, and the
cost of obtaining the sample dominates the cost of taking the gradient
step, we do not gain (but do not lose much either) from rejection
sampling.  We might still gain from rejection sampling if the cost of
operating on a sample (calculating the actual gradient and taking a
step according to it) dominates the cost of obtaining it and (a bound
on) the Lipschitz constant.

\subsection{A Family of Partially Biased Schemes}

The choice of weights \eqref{eq:wLmixed} corresponds to an equal mix of uniform and fully biased sampling.   More generally, we could consider sampling according to any one of a family of weights which interpolate between uniform and fully biased sampling:
\begin{equation}
  \label{eq:partialbias}
  \wLA(i) = \lambda + (1-\lambda) \frac{L_i}{\overline{L}}, \quad \quad \lambda \in [0,1]. 
\end{equation}

To be concrete, we summarize below the a template algorithm for SGD with partially biased sampling:

\begin{center}
\begin{algorithm}[ht]
\caption{Stochastic Gradient Descent with Partially Biased Sampling}
	\label{alg:randomized-general}
\begin{center} \fbox{
\begin{minipage}{.95\textwidth} 
\vspace{4pt}
\alginout{\begin{itemize}
\item	Initial estimate $\xo \in \mathbb{R}^d$ 
\item Bias parameter $\lambda \in [0,1]$
\item Step size $\gamma>0$
\item	Tolerance parameter $\delta > 0$
\item Access to the source distribution $\DD$
\item If $\lambda<1$: bounds on the Lipschitz constants $L_i$; the
  weights $w^\lambda(i)$ derived from them (see eq. \ref{eq:partialbias});
  and access to the weighted distribution $\DD^{(\lambda)}$.
\end{itemize}}
{Estimated solution $\xhat$ to the problem $\min_{\x} \F(\x)$
}
\vspace{8pt}\hrule\vspace{8pt}

\begin{algtab*}
$k \leftarrow 0$

\algrepeat %\\
	$k \leftarrow k + 1$ \\
	Draw an index $i \sim \DD^{(\lambda)}$. \\
	$\x_k \leftarrow \x_{k-1} - \frac{\gamma}{w^\lambda(i)}\nabla f_i(\x_{k-1})$  \\ 
\alguntil{{$ \nabla\F(\x) \leq \delta$}} 
$\xhat \leftarrow \x_k$
\end{algtab*}
%\vspace{-4pt}
\end{minipage}}
\end{center}
%\vspace{0in}
\end{algorithm}
\end{center}
For arbitrary $\lambda \in [0,1]$, we have the bounds
\begin{equation}
  \supLwLA = \sup_i \frac{L_i}{\lambda+ (1-\lambda)
    \frac{L_i}{\overline{L}}} \leq  \min\left( \frac{\overline{L}}{1-\lambda}, \frac{\sup_i L_i}{\lambda} \right)
    \nonumber
\end{equation}
and
\begin{equation}
  \sigmaLwLA^2 = \E[ \frac{1}{\lambda + (1-\lambda)
    \frac{L_i}{\overline{L}}} \enormsq{\nabla f_i(\xmin)}] \leq \max \left( \frac{1}{\lambda}, \frac{\overline{L}}{(1-\lambda) \inf_i L_i} \right)\sigma^2
    \nonumber
\end{equation}
Plugging these quantities into Corollary \ref{thm:SGDsteps}, we obtain:
\begin{corollary}\label{thm:partial bias} Let each $f_i$ be convex where
  $\nabla f_i$ has Lipschitz constant $L_i$ and let
  $\F(\x)=\E_{i\sim\DD}[f_i(\x)]$, where $\F(\x)$ is $\mu$-strongly
  convex.  Set $\sigma^2 = \E \enormsq{\nabla f_i(\xmin)}$, where $\xmin
  = \argmin_{\x} \F(\x)$. For any desired $\varepsilon$,
  using a stepsize of
$$ \gamma = \frac{\mu \varepsilon}{2\varepsilon \mu  \min\left( \frac{\overline{L}}{1-\lambda}, \frac{\sup_i L_i}{\lambda} \right)+ 2\max \left( \frac{1}{\lambda}, \frac{\overline{L}}{(1-\lambda) \inf_i L_i} \right)\sigma^2}$$
we have that after
{
\begin{equation}%\label{eq:cor1mix}
k = 2\log(2\varepsilon_0/\varepsilon) \left(\frac{ \min\left( \frac{\overline{L}}{1-\lambda}, \frac{\sup_i L_i}{\lambda} \right)}{\mu} + \frac{ \max \left( \frac{1}{\lambda}, \frac{\overline{L}}{(1-\lambda) \inf_i L_i} \right)\sigma^2
}{\mu^2 \varepsilon} \right) 
\nonumber
\end{equation}
}
iterations of $w$-weighted SGD \eqref{eq:SGDw} with partially biased weights
 \eqref{eq:partialbias}, $\E^{(w)}\enormsq{\xcur - \xmin} \leq \varepsilon$,
where $\varepsilon_0 = \enormsq{\x_0 - \xmin}$ and $\overline{L}=\E L_i$.
\end{corollary}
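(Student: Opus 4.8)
The plan is to obtain the result as a direct instantiation of Corollary \ref{thm:SGDsteps} applied to the $\wLA$-weighted SGD iterates \eqref{eq:SGDw}, exactly paralleling the derivation of Corollary \ref{thm:mixsteps} but with the one-parameter family of weights \eqref{eq:partialbias}. First I would check that these weights are normalized: since $\E[L_i]=\overline{L}$, we have $\E[\wLA(i)] = \lambda + (1-\lambda)\E[L_i]/\overline{L} = \lambda + (1-\lambda) = 1$, so all the weighted-SGD machinery of Section \ref{sec:importance} applies and the objective, the minimizer $\xmin$, and the strong-convexity constant $\mu$ are unchanged. It then remains only to supply the two quantities that Corollary \ref{thm:SGDsteps} consumes for the weighted representation -- the supremal weighted Lipschitz constant $\supLwLA$ and the weighted residual $\sigmaLwLA^2$ -- which are precisely the bounds displayed immediately before the statement.

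For those two bounds I would lower-bound the weighted denominator $\lambda + (1-\lambda)L_i/\overline{L}$ by each of its two nonnegative summands in turn. Dropping the first summand gives $\lambda + (1-\lambda)L_i/\overline{L} \geq (1-\lambda)L_i/\overline{L}$, whence $L_i/(\lambda + (1-\lambda)L_i/\overline{L}) \leq \overline{L}/(1-\lambda)$; dropping the second gives $\geq \lambda$, whence the ratio is $\leq L_i/\lambda \leq (\sup_i L_i)/\lambda$. Taking the smaller of the two, uniformly over $i$, yields the stated $\supLwLA \leq \min(\overline{L}/(1-\lambda),\,(\sup_i L_i)/\lambda)$. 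For the residual I would first use the identity \eqref{eq:Ew}, exactly as in \eqref{eq:sigmaLwL}, to rewrite $\sigmaLwLA^2 = \E^{(w)}[\enormsq{\nabla \fwi(\xmin)}]$ as the source-distribution expectation $\E[\frac{1}{\wLA(i)}\enormsq{\nabla f_i(\xmin)}]$; then I would bound the reciprocal weight pointwise, using $\lambda + (1-\lambda)L_i/\overline{L} \geq \lambda$ to get $1/\wLA(i) \leq 1/\lambda$ and $\geq (1-\lambda)(\inf_i L_i)/\overline{L}$ to get $1/\wLA(i) \leq \overline{L}/((1-\lambda)\inf_i L_i)$, so that $1/\wLA(i) \leq \max(1/\lambda,\,\overline{L}/((1-\lambda)\inf_i L_i))$ for all $i$. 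Pulling this constant out of the expectation and using $\E\enormsq{\nabla f_i(\xmin)} = \sigma^2$ gives the stated bound on $\sigmaLwLA^2$.

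Finally I would substitute these two bounds into the step-size and iteration-count formulas of Corollary \ref{thm:SGDsteps}, which reproduces verbatim the $\gamma$ and $k$ claimed here. The one point that needs care -- and the only part that is not pure bookkeeping -- is that we are plugging in upper bounds $B = \min(\cdots)$ and $C\sigma^2 = \max(\cdots)\sigma^2$ rather than the exact values of $\supLwLA$ and $\sigmaLwLA^2$. To justify this I would invoke the monotonicity of the guarantee \eqref{recursion} in these two parameters: for fixed $\gamma$ with $\gamma B < 1$, the contraction factor $1 - 2\gamma\mu(1-\gamma\supLwLA)$ is nondecreasing in $\supLwLA$ and the residual term $\gamma\sigmaLwLA^2/(\mu(1-\gamma\supLwLA))$ is nondecreasing in both $\supLwLA$ and $\sigmaLwLA^2$, so replacing the true values by the larger $B$ and $C\sigma^2$ can only inflate the right-hand side of \eqref{recursion}. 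Since the step size is constructed so that $\gamma B = 1/(2 + 2C\sigma^2/(\mu\varepsilon B)) \leq 1/2$, the admissibility condition $\gamma\supLwLA \leq \gamma B < 1$ required by Theorem \ref{thm:gen} holds automatically, and the split of the proof of Corollary \ref{thm:SGDsteps} into a residual half $\leq \varepsilon/2$ and a contraction half $\leq \varepsilon/2$ goes through verbatim with $B$ and $C\sigma^2$ in place of $\sup L$ and $\sigma^2$. I expect this monotonicity and admissibility check to be the only genuine obstacle; everything else is the routine algebra already carried out for Corollaries \ref{thm:SGDsteps} and \ref{thm:mixsteps}.
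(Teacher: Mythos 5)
Your proposal is correct and follows exactly the paper's route: the paper's own proof consists of the two displayed bounds on $\supLwLA$ and $\sigmaLwLA^2$ (obtained by lower-bounding the weight by each of its summands, just as you do) followed by "plugging these quantities into Corollary \ref{thm:SGDsteps}." Your additional remarks on normalization of the weights and on the monotonicity of the bound \eqref{recursion} in $\supLwLA$ and $\sigmaLwLA^2$ merely make explicit a step the paper leaves implicit, and do not change the argument.
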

In this corollary, even if $\lambda$ is close to 1, i.e. we add only a
small amount of bias to the sampling, we obtain a bound with a linear
dependence on the average conditioning $\overline{L}/\mu$ (multiplied
by a factor of $\frac{1}{\lambda}$), since we can bound $\min\left(
  \frac{\overline{L}}{1-\lambda}, \frac{\sup_i L_i}{\lambda} \right)
\leq \frac{\overline{L}}{1-\lambda}$.

\section{Importance Sampling for SGD in Other Scenarios}\label{sec:other}

In the previous Section, we considered SGD for smooth and strongly
convex objectives, and were particularly interested in the regime
where the residual $\sigma^2$ is low, and the linear convergence term
is dominant.  Weighted SGD could of course be relevant also in other
scenarios, and we now briefly survey them, as well as relate them to
our main scenario of interest.

\subsection{Smooth, Not Strongly Convex}

When each component $f_i$ is convex, non-negative, and has an
$L_i$-Lipschitz gradient, but the objective $F(\x)$ is not necessarily
strongly convex, then after
\begin{equation}
  \label{eq:SST}
  k = O\left( \frac{ (\sup L) \enormsq{\xmin} }{\varepsilon} \cdot
    \frac{F(\xmin)+\varepsilon}{\varepsilon} \right)
\end{equation}
iterations of SGD with an appropriately chosen step-size we will have
$F(\overline{\xcur})\leq F(\xmin)+\varepsilon$, where
$\overline{\xcur}$ is an appropriate averaging of the $k$ iterates
\citet{SST2010}.  The relevant quantity here determining the iteration
complexity is again $\sup L$.  Furthermore, \citet{SST2010}, relying
on an example from \citet{FoygelSrebroCOLT2011}, point out that the
dependence on the supremum is unavoidable and {\em cannot} be replaced
with the average Lipschitz constant $\overline{L}$.  That is, if we
sample gradients according to the source distribution $\DD$, we must
have a linear dependence on $\sup L$.

The only quantity in the bound \eqref{eq:SST} that changes with a
re-weighting is $\sup L$---all other quantities ($\enormsq{\xmin}$,
$F(\xmin)$, and the sub-optimality $\varepsilon$) are invariant to
re-weightings.  We can therefor replace the dependence on $\sup L$
with a dependence on $\supLw$ by using a weighted SGD as in
\eqref{eq:SGDw}.  As we already calculated, the optimal weights are
given by \eqref{eq:wL}, and using them we have $\supLw =
\overline{L}$.  In this case, there is no need for partially biased
sampling, and we obtain that with an appropriate step-size,
\begin{equation}
  \label{eq:SST2}
  k = O\left( \frac{ \overline{L} \enormsq{\xmin} }{\varepsilon} \cdot
    \frac{F(\xmin)+\varepsilon}{\varepsilon} \right)
\end{equation}
iterations of weighed SGD updates \eqref{eq:SGDw} using the weights
\eqref{eq:wL} suffice.

We again see that using importance sampling allows us to reduce the
dependence on $\sup L$, which is unavoidable without biased sampling,
to a dependence on $\overline{L}$.

\subsection{Non-Smooth Objectives}

We now turn to non-smooth objectives, where the components $f_i$ might
not be smooth, but each component is $G_i$-Lipschitz.  Roughly
speaking, $G_i$ is a bound on the first derivative (gradient) of
$f_i$, while $L_i$ is a bound on the second derivatives of $f_i$.
Here, the performance of SGD depends on the second moment
$\overline{G^2}=\E[G_i^2]$.  The precise iteration
complexity depends on whether the objective is strongly convex or
whether $\xmin$ is bounded, but in either case depends linearly on
$\overline{G^2}$ (see e.g.~\cite{nes04,shamir2012stochastic}).

By using weighted SGD we can replace the linear dependence on
$\overline{G^2}$ with a linear dependence on $\sqGw = \E^{(w)}\left[
(\Fwi)^2 \right]$, where $\Fwi$ is the Lipschitz constant of the
scaled $\fwi$ and is given by $\Fwi = G_i/w(i)$.  Again, this follows
directly from the standard SGD guarantees, where we consider the
representation \eqref{eq:Fbyfw} and use any subgradient from
$\partial \fwi(\x)$.  

We can calculate:
\begin{equation}
  \label{eq:sqGw}
  \sqGw = \E^{(w)}\left[\frac{G^2_i}{w(i)^2}\right] = \E\left[\frac{G^2_i}{w(i)}\right] 
\end{equation}
which is minimized by the weights:
\begin{equation}\label{eq:wG}
  w(i) = \frac{G_i}{\overline{G}}
\end{equation}
where $\overline{G}=\E G_i$.  Using these weights we have $\sqGw =
\E[G_i]^2 = \overline{G}^2$.  Using importance sampling, we can thus
reduce the linear dependence on $\overline{G^2}$ to a linear
dependence on $\overline{G}^2$.  Its helpful to recall that
$\overline{G^2} = \overline{G}^2 + \variance[G_i]$.  What we save is
therefore exactly the variance of the Lipschitz constants
$G_i$.

In parallel work, \citet{zhao2014stochastic} also consider importance
sampling for stochastic optimization for non-smooth objectives.
\citeauthor{zhao2014stochastic} consider a more general setting,
with a composite objective that is only partially linearized.  But
also there, the iteration complexity depends on the second moment of
the gradient estimates, and the analysis performed above applies
(\citeauthor{zhao2014stochastic} perform a specialized analysis
instead).

\subsection{Non-Realizable Regime}

Returning to the smooth and strongly convex setting of Sections \ref{sec:SGD}
and \ref{sec:importance}, let us consider more carefully the residual term $\sigma^2
= \E \enormsq{\nabla f_i(\xmin)}$.  This quantity definitively depends
on the weighting, and in the analysis of Section \ref{sec:weightedSGD}, we avoided
increasing it too much, introducing partial biasing for this purpose.
However, if this is the dominant term, we might want to choose weights
so as to minimize this term.  The optimal weights here would be
proportional to $\enorm{\nabla f_i(\xmin)}$.  The problem is that we
do not know the minimizer $\xmin$, and so cannot calculate these
weights.  Approaches which dynamically update the weights based on the
current iterates as a surrogate for $\xmin$ are possible, but beyond
the scope of this paper.

An alternative approach is to bound $\enorm{\nabla f_i(\xmin)} \leq
G_i$ and so $\sigma^2 \leq \overline{G^2}$.  Taking this bound, we are
back to the same quantity as in the non-smooth case, and the optimal
weights are proportional to $G_i$.  Note that this is a different
weighting then using weights proportional to $L_i$, which optimize the
linear-convergence term as studied in Section \ref{sec:weightedSGD}.

To understand how weighting according to $G_i$ and $L_i$ are
different, consider a generalized linear objective where
$f_i(\x)=\phi_i(\ip{\z_i}{\x})$, and $\phi_i$ is a scalar function
with $|\phi'_i|\leq G_\phi$ and $|\phi''_i| \leq L_\phi$.  We have that
$G_i \propto \enorm{\z_i}$ while $L_i \propto \enormsq{\z_i}$.
Weighting according to the Lipschitz constants of the gradients,
i.e.~the ``smoothness'' parameters, as in \eqref{eq:wL}, versus
weighting according to the Lipschitz constants of $f_i$ as in
\eqref{eq:wG}, thus corresponds to weighting according to
$\enormsq{\z_i}$ versus $\enorm{\z_i}$, and are rather different.  We
can also calculate that weighing by $L_i \propto \enormsq{\z_i}$ (i.e.~following
\eqref{eq:wL}), yields $\sqGw = \overline{G^2} > \overline{G}^2$.
That is, weights proportional to $L_i$ yield a suboptimal
gradient-dependent term (the same dependence as if no weighting at all
was used).  Conversely, using weights proportional to $G_i$,
i.e.~proportional to $\enorm{\z_i}$ yields $\supLw =
(E[\sqrt{L_i}])\sqrt{\sup L}$ -- a suboptimal dependence, though better
then no weighting at all.

Again, as with partially biased sampling, we can weight by the average, $w(i) = \frac{1}{2}\cdot\frac{G_i}{\bar{G}}+\frac{1}{2}\cdot\frac{L_i}{\bar{L}}$ and ensure both terms are optimal up to a factor of two.

 \section{The least squares case and the Randomized Kaczmarz Method}\label{sec:kz}

 A special case of interest is the least squares problem, where 
\begin{equation}\label{eq:min}
\F(\x) = \frac{1}{2} \sum_{i=1}^n (\langle \ai, \x \rangle - b_i )^2  = \frac{1}{2} \|\A\x - \b\|_2^2
\end{equation}
with $\b$ an $n$-dimensional vector, $\A$ an $n\times d$ matrix with rows
$\ai$,  and $\xmin = \argmin_{\x}  \frac{1}{2} \|\A\x - \b\|_2^2$ is the
least-squares solution.  Writing the least squares problem
\eqref{eq:min} in the form \eqref{eq:min2}, we see that the source distribution $\DD$ is uniform over $\{1,2, \dots, n\}$, the components are $f_i=\frac{n}{2}(\langle \ai, \x \rangle - b_i )^2$, the Lipschitz constants are $L_i = n \| \ai \|_2^2$, the average Lipschitz constant is $\frac{1}{n} \sum_i L_i = \| \A \|_F^2$, the strong convexity parameter is $\mu = \frac{1}{\|(\A^T\A)^{-1}\|_2}$, so that $K(\A)  := \overline{L}/\mu = \| \A \|^2_F \| (\A^T\A)^{-1}\|_2$, and the residual is $\sigma^2 =  n\sum_i \enormsq{ \ai } |\ip{\ai}{\xmin}-\bi|^2$.  Note that in the case that $\A$ is not full-rank, one can instead replace $\mu$ with the smallest nonzero eigenvalue of $\A^*\A$ as in~\citep[Equation (3)]{liu2014asynchronous}.  In that case, we instead write $K(\A) = \| \A \|^2_F \| (\A^T\A)^{\dagger}\|_2$ as the appropriate condition number.

The randomized \emph{Kaczmarz method}~\citep{SV09:Randomized-Kaczmarz,CFMSS92:New-Variants,HM93:Algebraic-Reconstruction,Nat01:Mathematics-Computerized,whitney1967two,censor1983strong,tanabe1971projection,hanke1990acceleration,ZF12:Randomized-Extended,Nee10:Randomized-Kaczmarz} for solving the least squares problem \eqref{eq:min} begins with an arbitrary estimate $\x_0$, and in the $k$th iteration selects a row $i = i(k)$ i.i.d. at random from the matrix $\A$ and iterates by:
\begin{equation}\label{basic:iter}
\xnext = \xcur + c\cdot\frac{\bi - \<\ai, \xcur\>}{\|\ai\|_2^2}\ai,
\end{equation}
where the step size $c=1$ in the standard method.
 
Strohmer and Vershynin provided the first non-asymptotic convergence
rates, showing that drawing rows proportionally to $\enormsq{\ai}$
leads to provable exponential convergence in
expectation for the full-rank case~\citep{SV09:Randomized-Kaczmarz}.  Their method can easily be extended to the case when the matrix is not full-rank to yield convergence to some solution, see e.g.~\citep[Equation (3)]{liu2014asynchronous}. Recent works use
acceleration techniques to improve convergence
rates~\citep{lee2013efficient,Pop98:Extensions-Block-Projections,EN11:Acceleration-Randomized,popa2012kaczmarz,ZF12:Randomized-Extended,Elf80:Block-Iterative-Methods,EHL81:Iterative-Algorithms,Byr08:Applied-Iterative,Pop99:Block-Projections-Algorithms,Pop01:Fast-Kaczmarz-Kovarik,Pop04:Kaczmarz-Kovarik-Algorithm,NW12:Two-Subspace-Projection,needell2013paved}.

However, one can easily verify that the iterates \eqref{basic:iter}
are precisely weighted SGD iterates \eqref{eq:SGDw} with the fully
biased weights \eqref{eq:wL}.

The reduction of the quadratic dependence on the conditioning to a
linear dependence in Theorem \ref{thm:gen}, as well as the use of
biased sampling which we investigate here was motivated by
\citeauthor{SV09:Randomized-Kaczmarz}'s analysis of the randomized
Kaczmarz method.  Indeed, applying Theorem \ref{thm:gen} to the
weighted SGD iterates \eqref{eq:SGD} for \eqref{eq:min} with the
weights \eqref{eq:wL} and a stepsize of $\gamma=1$ yields precisely
the \citet{SV09:Randomized-Kaczmarz} guarantee.

Understanding the randomized Kaczmarz method as SGD allows us also
to obtain improved methods and results for the randomized Kaczmarz
method:

\bigskip

\paragraph{\bf Using Step-sizes.}
As shown by~\citet{SV09:Randomized-Kaczmarz} and extended
by~\citet{Nee10:Randomized-Kaczmarz}, the randomized Kaczmarz method
with weighted sampling exhibits exponential convergence, but only to
within a radius, or \emph{convergence horizon}, of the least-squares
solution.  This is because a step-size of $\gamma=1$ is used, and so
the second term in \eqref{recursion} does not vanish.  It has been
shown~\citep{whitney1967two,censor1983strong,tanabe1971projection,hanke1990acceleration,NW12:Two-Subspace-Projection}
that changing the step size can allow for convergence inside of this
convergence horizon, although non-asymptotic results have been
difficult to obtain.  Our results allow for finite-iteration
guarantees with arbitrary step-sizes and can be immediately applied to this
setting.  Indeed, applying Theorem \ref{thm:gen} with the weights \eqref{eq:wL}
 gives
\begin{corollary}
\label{thm:genSV}
Let $\A$ be an $n\times d$ matrix with rows $\ai$.  Set $\e = \A\xmin - \b$, where $\xmin$ is the minimizer of the problem 
$$
\xmin = \argmin_{\x} \frac{1}{2}\enormsq{\A\x-\b}.
$$
Suppose that $c < 1$.  
Set $a^2_{\min} = \inf_i\enormsq{\ai}$, $a^2_{\max} = \sup_i\enormsq{\ai}$.
Then the expected error at the $kth$ iteration of the Kaczmarz method described by~\eqref{basic:iter} with row $\ai$ selected with probability $p_i = \enormsq{\ai}/\|\A\|^2_F$  satisfies
\begin{equation}
\label{err:K}
\E\enormsq{\xcur - \xmin} \leq \left[ 1 - \frac{2c(1-c)}{K(\A)}  \right]^k\enormsq{\xo - \xmin} +  \frac{c}{1-c} K(\A) r,
\end{equation}
with $r = \frac{\sigma^2}{n \| \A \|_F^2 \cdot a^2_{\min}}$.  The expectation is taken with respect to the weighted distribution over the rows.
\end{corollary}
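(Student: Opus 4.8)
The plan is to recognize the randomized Kaczmarz iteration \eqref{basic:iter} as a particular instance of $w$-weighted SGD and then invoke Theorem \ref{thm:gen} directly, tracking how each quantity in \eqref{recursion} specializes to the least squares problem \eqref{eq:min}. First I would make the correspondence between the step sizes explicit: writing the $w$-weighted update \eqref{eq:SGDw} with the fully biased weights $\wL(i)=L_i/\overline{L}$ and $\nabla f_i(\x)=n(\ip{\ai}{\x}-\bi)\ai$, the factors of $n$ cancel and the update reads $\xnext=\xcur+\gamma\|\A\|_F^2\,\tfrac{\bi-\ip{\ai}{\xcur}}{\|\ai\|_2^2}\ai$. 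Comparing with \eqref{basic:iter} identifies $c=\gamma\overline{L}$, i.e.~$\gamma=c/\overline{L}$, and the weighted source distribution $\DD^{(\wL)}$ places mass $p_i\propto\wL(i)\cdot\tfrac1n\propto\|\ai\|_2^2$, which after normalizing is exactly $p_i=\|\ai\|_2^2/\|\A\|_F^2$ as in the statement.

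Next I would compute the two quantities Theorem \ref{thm:gen} requires for the weighted representation. Since the fully biased weights equalize all component Lipschitz constants, $\Lwi=L_i/\wL(i)=\overline{L}$ for every $i$, so $\supLw=\overline{L}=\|\A\|_F^2$; in particular the hypothesis $\gamma<1/\supLw$ becomes $c<1$, matching the assumption. For the residual I would invoke \eqref{eq:sigmaLwL}, which gives $\sigmaLwL^2=\E[\tfrac{\overline{L}}{L_i}\enormsq{\nabla f_i(\xmin)}]\leq\tfrac{\overline{L}}{\inf L}\sigma^2$, and then substitute $\inf L=\inf_i n\|\ai\|_2^2=n\,a^2_{\min}$ and $\overline{L}=\|\A\|_F^2$ to obtain $\sigmaLwL^2\leq\tfrac{\|\A\|_F^2}{n\,a^2_{\min}}\sigma^2$.

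Finally I would plug $\gamma=c/\overline{L}$, $\supLw=\overline{L}$, and this residual bound into \eqref{recursion} and simplify using $K(\A)=\overline{L}/\mu$. The contraction factor becomes $1-2\gamma\mu(1-\gamma\supLw)=1-\tfrac{2c(1-c)}{K(\A)}$, reproducing the geometric term in \eqref{err:K}; the residual term is $\frac{\gamma\sigmaLwL^2}{\mu(1-\gamma\supLw)}=\frac{c}{1-c}\cdot\frac{\sigmaLwL^2}{\overline{L}\mu}$, and rewriting $\frac{1}{\overline{L}\mu}=\frac{K(\A)}{\overline{L}^2}$ and applying the bound on $\sigmaLwL^2$ collapses this to $\frac{c}{1-c}K(\A)\,r$ with $r=\frac{\sigma^2}{n\|\A\|_F^2\,a^2_{\min}}$. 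The work here is entirely bookkeeping: the only place demanding care is the conversion chain from the abstract residual $\sigmaLwL^2$ to the explicit $r$, since one must pass through \eqref{eq:sigmaLwL}, substitute $\inf L$ and $\overline{L}$, and cancel the powers of $\overline{L}$ against $K(\A)$ so that exactly one factor of $\|\A\|_F^2$ survives in the denominator. I expect no genuine analytic obstacle — once the iteration is recast as weighted SGD, the corollary is a direct specialization of Theorem \ref{thm:gen}.
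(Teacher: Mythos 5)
Your proposal is correct and is exactly the route the paper takes: the paper offers no separate proof of Corollary~\ref{thm:genSV} beyond the remark that it follows by ``applying Theorem~\ref{thm:gen} with the weights~\eqref{eq:wL},'' and your bookkeeping --- the identification $c=\gamma\overline{L}$, the equalized Lipschitz constants $\supLwL=\overline{L}=\fnormsq{\A}$ turning $\gamma<1/\supLwL$ into $c<1$, and the passage from $\sigmaLwL^2\le(\overline{L}/\inf L)\sigma^2$ to $r$ --- checks out line by line. No gaps.
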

When e.g. $c=\frac{1}{2}$, we recover the exponential rate of~\citet{SV09:Randomized-Kaczmarz} up to a factor of $2$,  and nearly the same convergence horizon.  For arbitrary $c$, Corollary \ref{thm:genSV}  implies a tradeoff between a smaller convergence horizon and a slower convergence rate. 

\bigskip

\paragraph{\bf Uniform Row Selection.} The Kaczmarz variant of \citet{SV09:Randomized-Kaczmarz} calls for weighted row sampling, and thus requires
pre-computing all the row norms.  Although certainly possible in some
applications, in other cases this might be better avoided.
Understanding the randomized Kaczmarz as SGD allows us to apply
Theorem \ref{thm:gen} also with uniform weights (i.e.~to the
unbiased SGD), and obtain a randomized Kaczmarz using uniform
sampling, which converges to the least-squares solution and enjoys
finite-iteration guarantees:
\begin{corollary}\label{thm:genUnif}
Let $\A$ be an $n\times d$ matrix with rows $\ai$.   Let $\D$ be the diagonal matrix with terms $d_{j,j} = \| \ai \|_2$, and consider the composite matrix $\D^{-1} \A$.  Set $\e_{w} = \D^{-1} (\A \xmin^{w} - \b)$, where $\xmin^{w}$ is the minimizer of the weighted least squares problem 
\begin{equation}
\nonumber
\xmin^{w} = \argmin_{\x} \frac{1}{2}\enormsq{\D^{-1}(\A \x-\b)}.
\end{equation}
Suppose that $c < 1$.
Then the expected error after $k$ iterations of the Kaczmarz method described by~\eqref{basic:iter} with uniform row selection satisfies
\begin{equation}\nonumber
\E\enormsq{\xcur - \xmin^{w}} \leq \left[ 1 - \frac{2c(1-c)}{K(\D^{-1}\A)} \Big) \right]^k\enormsq{\xo - \xmin^{w}} + \frac{ c}{1-c} K(\D^{-1}\A) r_{w},
\end{equation}
where $r_{w} =  \enormsq{ \e_{w} }/n.$
\end{corollary}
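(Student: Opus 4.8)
The plan is, in the same spirit as Corollary \ref{thm:genSV}, to recognize the \emph{uniform}-sampling Kaczmarz iteration as ordinary (unbiased) SGD applied to the \emph{row-normalized} least squares problem, and then to invoke Theorem \ref{thm:gen} with the preconditioned matrix $\D^{-1}\A$ in place of $\A$. Throughout, write $\tilde{\ai} = \ai/\enorm{\ai}$ for the (unit-norm) rows of $\D^{-1}\A$ and $\tilde{b}_i = \bi/\enorm{\ai}$ for the entries of $\D^{-1}\b$.

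First I would rewrite the iteration \eqref{basic:iter}. A direct computation gives
\[ c\cdot\frac{\bi - \ip{\ai}{\xcur}}{\enormsq{\ai}}\ai = c\bigl(\tilde{b}_i - \ip{\tilde{\ai}}{\xcur}\bigr)\tilde{\ai}, \]
so uniform-row Kaczmarz is exactly the SGD update \eqref{eq:SGD} with step-size $\gamma = c/n$, applied to the components $f_i(\x) = \tfrac{n}{2}\bigl(\ip{\tilde{\ai}}{\x} - \tilde{b}_i\bigr)^2$ of the row-normalized objective $\tfrac{1}{2}\enormsq{\D^{-1}(\A\x - \b)} = \tfrac{1}{2}\sum_i\bigl(\ip{\tilde{\ai}}{\x}-\tilde{b}_i\bigr)^2$, with $i$ drawn uniformly. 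Because the source distribution of this representation is uniform, the relevant expectation is $\E$ and Theorem \ref{thm:gen} applies directly, with no reweighting of the representation needed.

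Next I would read off the parameters of this objective from the least-squares dictionary at the start of the section, now for $\D^{-1}\A$. The crucial simplification is that every row of $\D^{-1}\A$ has unit norm, so every Lipschitz constant equals $L_i = n\enormsq{\tilde{\ai}} = n$; hence $\sup L = \overline{L} = n$, and uniform sampling is already optimal (this is precisely why Theorem \ref{thm:gen}, rather than its weighted variant, is the right tool). The strong convexity parameter is $\mu = 1/\enorm{((\D^{-1}\A)^{T}\D^{-1}\A)^{-1}}$, so $K(\D^{-1}\A) = \overline{L}/\mu = n/\mu$, and since $\ip{\tilde{\ai}}{\xmin^{w}} - \tilde{b}_i = (\e_{w})_i$, the residual is $\sigma^2 = n\sum_i\bigl(\ip{\tilde{\ai}}{\xmin^{w}} - \tilde{b}_i\bigr)^2 = n\enormsq{\e_{w}} = n^2 r_{w}$.

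Finally I would substitute $\gamma = c/n$, $\sup L = n$, $\mu = n/K(\D^{-1}\A)$, and $\sigma^2 = n^2 r_{w}$ into \eqref{recursion}. The product $\gamma\sup L = c$ collapses the contraction factor to $2\gamma\mu(1-\gamma\sup L) = 2c(1-c)/K(\D^{-1}\A)$ and the residual term to $\frac{\gamma\sigma^2}{\mu(1-\gamma\sup L)} = \frac{c}{1-c}K(\D^{-1}\A)r_{w}$, which is the claimed bound; note the hypothesis $c < 1$ is exactly the requirement $\gamma < 1/\sup L$ of Theorem \ref{thm:gen}. There is no serious obstacle here, since the reduction to SGD is an identity and Theorem \ref{thm:gen} does all the analytic work; the only real care needed is bookkeeping the powers of $n$, in particular the identity $\sigma^2 = n\enormsq{\e_{w}}$ and the cancellations produced by $\mu = n/K(\D^{-1}\A)$.
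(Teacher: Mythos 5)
Your proposal is correct and follows exactly the route the paper intends: recognizing uniform-row Kaczmarz as unbiased SGD with step-size $\gamma = c/n$ on the row-normalized objective $\tfrac{1}{2}\enormsq{\D^{-1}(\A\x-\b)}$, reading off $\sup L = n$, $\mu = n/K(\D^{-1}\A)$, and $\sigma^2 = n\enormsq{\e_w}$, and substituting into Theorem \ref{thm:gen}. The bookkeeping (the cancellation $\gamma\sup L = c$ and the identification of the residual term with $\tfrac{c}{1-c}K(\D^{-1}\A)r_w$) all checks out.
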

Note that the randomized Kaczmarz algorithm with uniform row selection converges exponentially to a \emph{weighted} least-squares solution, to within arbitrary accuracy by choosing sufficiently small stepsize $c$. Thus, in general, the randomized Kaczmarz algorithms with uniform and biased row selection converge (up to a convergence horizon) towards different solutions.     

\bigskip

\paragraph{\bf Partially Biased Sampling.}
As in our SGD analysis, using the partially biased sampling weights is applicable also for the randomized Kaczmarz
method.  Applying Theorem \ref{thm:gen} using weights \eqref{eq:wLmixed} gives
\begin{corollary}[Randomized Kaczmarz with partially biased sampling]\label{thm:genHybrid}
Let $\A$ be an $n\times d$ matrix with rows $\ai$.   Set $\e = \A\xmin - \b$, where $\xmin$ is the minimizer of the problem 
$$
\xmin = \argmin_{\x} \frac{1}{2}\enormsq{\A\x-\b}.
$$
Suppose $c < 1/2$.  Then the iterate $\xcur$ of the modified Kaczmarz method described by
\begin{equation}
\label{hybridK}
\xnext = \xcur + 2c\cdot\frac{\bi - \<\ai, \xcur\>}{\| \A \|_F^2/n + \|\ai\|_2^2}\ai
\end{equation}
with row $\ai$ selected with probability $p_i = \frac{1}{2} \cdot \frac{\|\ai\|_2^2}{\| \A \|_F^2} + \frac{1}{2} \cdot \frac{1}{n}$ satisfies
\begin{equation}
\label{err:Khybrid}
\E\enormsq{\xcur - \xmin} \leq \left[ 1 - \frac{2c (1-2 c) }{K(\A)}\right]^k\enormsq{\xo - \xmin} + \frac{c K(\A)}{1-2 c} \cdot \frac{2 \sigma^2}{n \| \A \|_F^2}
\end{equation}
\end{corollary}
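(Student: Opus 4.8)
\emph{The plan is to} recognize the modified Kaczmarz update \eqref{hybridK} as exactly the $w$-weighted SGD recursion \eqref{eq:SGDw} for the least-squares objective \eqref{eq:min} under the partially biased weights \eqref{eq:wLmixed}, and then to read the guarantee \eqref{err:Khybrid} off Theorem~\ref{thm:gen} using the quantities already tabulated in Section~\ref{sec:importance}. No new analytic machinery is needed.

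First I would record the least-squares dictionary for \eqref{eq:min}: the source distribution is uniform on $\{1,\dots,n\}$, the components are $f_i(\x)=\tfrac{n}{2}(\ip{\ai}{\x}-\bi)^2$ with $\nabla f_i(\x)=n(\ip{\ai}{\x}-\bi)\ai$, the Lipschitz constants are $L_i=n\enormsq{\ai}$, their mean is $\overline{L}=\fnormsq{\A}$, and $K(\A)=\overline{L}/\mu$. Substituting $L_i$ and $\overline{L}$ into \eqref{eq:wLmixed} gives $w(i)=\tfrac{1}{2}+\tfrac{1}{2}\,n\enormsq{\ai}/\fnormsq{\A}$; since $\DD$ is uniform, the induced weighted distribution $\DD^{(w)}$ places on row $i$ the mass $p_i=\tfrac{1}{2}\,\enormsq{\ai}/\fnormsq{\A}+\tfrac{1}{2}\,n^{-1}$ stated in the corollary (one checks $\sum_i p_i=1$ at once).

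Next I would verify the iterate correspondence. Inserting $\nabla f_i$ and $w(i)$ into \eqref{eq:SGDw} and simplifying the scalar $n/w(i)=2\fnormsq{\A}/(\fnormsq{\A}/n+\enormsq{\ai})$ turns the update into $\xnext=\xcur+2\gamma\fnormsq{\A}\cdot\frac{\bi-\ip{\ai}{\xcur}}{\fnormsq{\A}/n+\enormsq{\ai}}\,\ai$, which is precisely \eqref{hybridK} under the identification $\gamma=c/\fnormsq{\A}=c/\overline{L}$. Hence \eqref{hybridK} is literally $w$-weighted SGD, so every hypothesis of Theorem~\ref{thm:gen} transfers; in particular the requirement $\gamma<1/\supLw$, combined with the bound $\supLw\le 2\overline{L}$ from \eqref{eq:supLwLmix}, is exactly the stated condition $c<\tfrac{1}{2}$.

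Finally I would apply Theorem~\ref{thm:gen} to these iterates and feed in the two partial-bias estimates $\supLw\le 2\overline{L}$ and $\sigmaLwL^2\le 2\sigma^2$ from \eqref{eq:supLwLmix}--\eqref{eq:sigmaLwLmix}. With $\gamma=c/\overline{L}$ we get $\gamma\supLw\le 2c$, so $1-\gamma\supLw\ge 1-2c$, and the contraction factor satisfies $1-2\gamma\mu(1-\gamma\supLw)\le 1-\frac{2c(1-2c)}{K(\A)}$ once $\overline{L}/\mu$ is rewritten as $K(\A)$; the residual $\frac{\gamma\sigmaLwL^2}{\mu(1-\gamma\supLw)}$ then reduces to the constant displayed in \eqref{err:Khybrid}. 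I expect no genuine obstacle: the one nontrivial inequality (the co-coercivity recursion) is already sealed inside Theorem~\ref{thm:gen}, and the remaining effort is purely bookkeeping---matching the numerical factor $2c$ and the symmetric denominator $\fnormsq{\A}/n+\enormsq{\ai}$ of \eqref{hybridK}, and tracking how $\supLw$, $\sigmaLwL^2$, $\gamma$, and $K(\A)$ recombine into \eqref{err:Khybrid}.
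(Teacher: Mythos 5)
Your proposal is correct and follows exactly the paper's (implicit) proof: the paper likewise obtains Corollary~\ref{thm:genHybrid} by identifying \eqref{hybridK} with $w$-weighted SGD under the weights \eqref{eq:wLmixed}, setting $\gamma = c/\fnormsq{\A}$, and substituting $\supLwL \le 2\overline{L}$ and $\sigmaLwL^2 \le 2\sigma^2$ into Theorem~\ref{thm:gen}. One bookkeeping caveat: the residual term actually comes out as $\frac{2c\sigma^2}{\mu(1-2c)\fnormsq{\A}} = \frac{cK(\A)}{1-2c}\cdot\frac{2\sigma^2}{\fnorm{\A}^4}$, which agrees with the displayed $\frac{cK(\A)}{1-2c}\cdot\frac{2\sigma^2}{n\fnormsq{\A}}$ only when $\fnormsq{\A}=n$ --- an issue with the corollary's stated constant rather than with your argument.
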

The partially biased randomized Kaczmarz method described above (which
does have modified update equation \eqref{hybridK} compared to the
standard update equation \eqref{basic:iter}) yields the same
convergence rate as the fully biased randomized Kaczmarz method
\citep{SV09:Randomized-Kaczmarz} (up to a factor of 2), but gives a
better dependence on the residual error over the fully biased
sampling, as the final term in \eqref{err:Khybrid} is smaller than the
final term in \eqref{err:K}.

\section{Numerical Experiments}\label{sec:exp}

In this section we present some numerical results for the randomized Kaczmarz algorithm with partially biased sampling,  that is, applying Algorithm \ref{alg:randomized-general} to the least squares problem $\F(\x) = \frac{1}{2} \|\A\x - \b\|_2^2$ (so $f_i(\x) = \frac{n}{2} ( \ip{\ai}{\x} - \bi )^2$) and considering $\lambda \in [0,1]$.  Recall that $\lambda = 0$ corresponds to the randomized Kaczmarz algorithm of Strohmer and Vershynin with fully weighted sampling \citep{SV09:Randomized-Kaczmarz}.  $\lambda = .5$ corresponds to the partially biased randomized Kaczmarz algorithm outlined in Corollary \ref{thm:genHybrid}.  We demonstrate how the behavior of the algorithm depends on $\lambda$, the conditioning of the system, and the residual error at the least squares solution. We focus on exploring the role of $\lambda$ on the convergence rate of the algorithm for various types of matrices $\A$.  We consider five types of systems, described below, each using a $1000\times 10$ matrix $\A$.  In each setting, we create a vector $\x$ with standard normal entries.  For the described matrix $\A$ and residual $\e$, we create the system $\b = \A\x + \e$ and run the randomized Kaczmarz method with various choices of $\lambda$.  Each experiment consists of $100$ independent trials and uses the optimal step size as in Corollary~\ref{thm:partial bias} with $\varepsilon = .1$; the plots show the average behavior over these trials. The settings below show the various types of behavior the Kazcmarz method can exhibit.  

\begin{description}
\item[Case 1] Each row of the matrix $\A$ has standard normal entries, except the last row which has normal entries with mean $0$ and variance $10^2$.  The residual vector $\e$ has normal entries with mean $0$ and variance $0.1^2$.

\item[Case 2] Each row of the matrix $\A$ has standard normal entries.  The residual vector $\e$ has normal entries with mean $0$ and variance $0.1^2$.  

\item[Case 3] The $j$th row of $\A$ has normal entries with mean $0$ and variance $j$.  The residual vector $\e$ has normal entries with mean $0$ and variance $20^2$.

\item[Case 4] The $j$th row of $\A$ has normal entries with mean $0$ and variance $j$.  The residual vector $\e$ has normal entries with mean $0$ and variance $10^2$.

\item[Case 5] The $j$th row of $\A$ has normal entries with mean $0$ and variance $j$.  The residual vector $\e$ has normal entries with mean $0$ and variance $0.1^2$.

\end{description}

\begin{figure}
\begin{tabular}{ccc}
{\bfseries Case 1:} & {\bfseries Case 2:}\\
\includegraphics[width=2.5in]{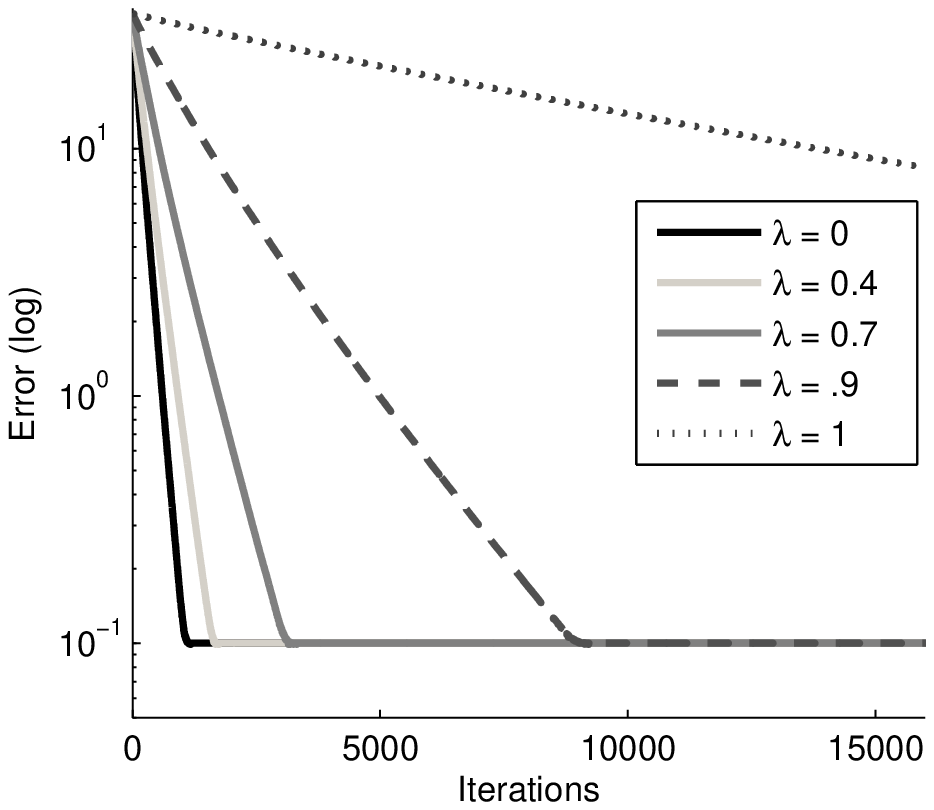} & \includegraphics[width=2.5in]{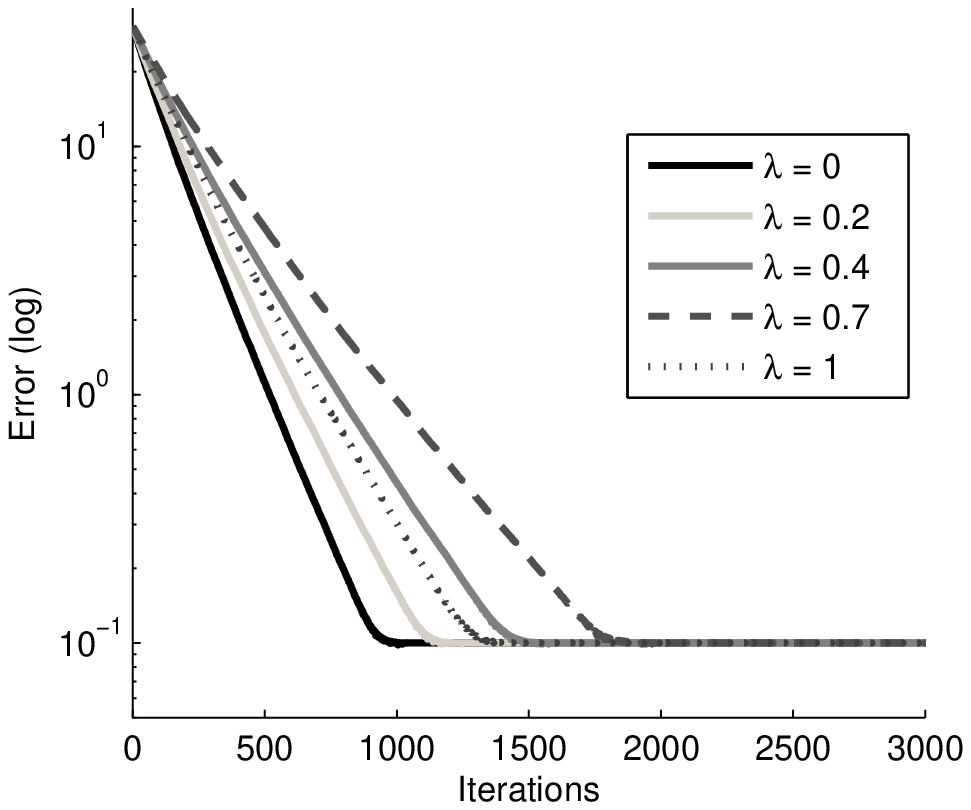} \\
\\
{\bfseries Case 3:} & {\bfseries Case 4:}\\
\includegraphics[width=2.5in]{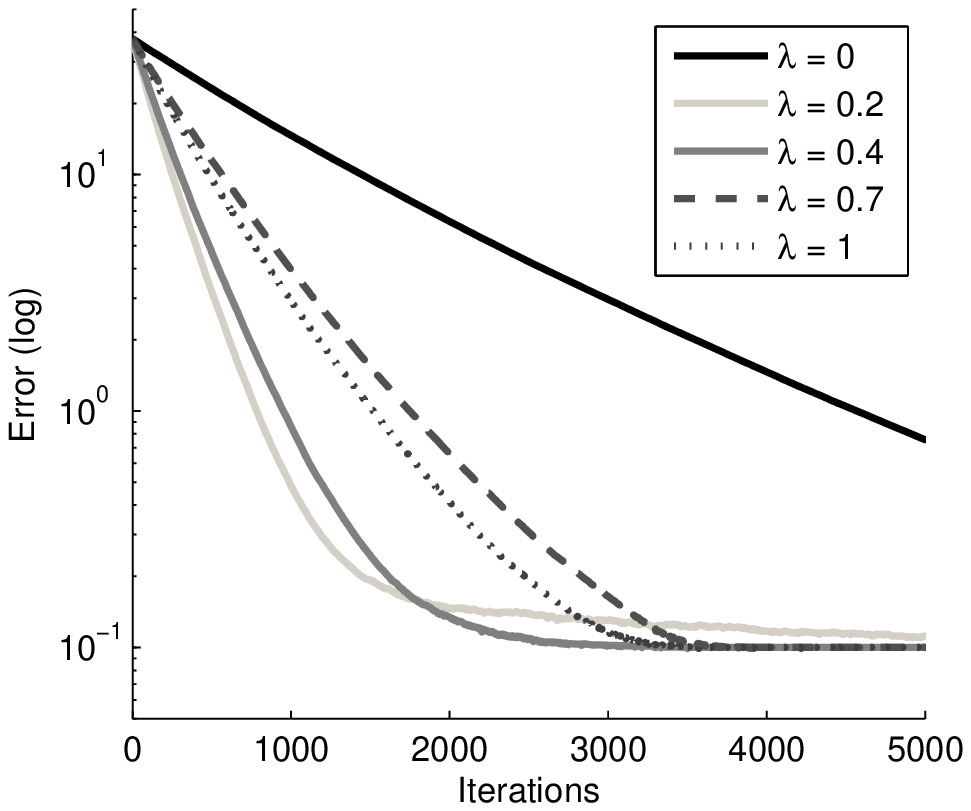} & \includegraphics[width=2.5in]{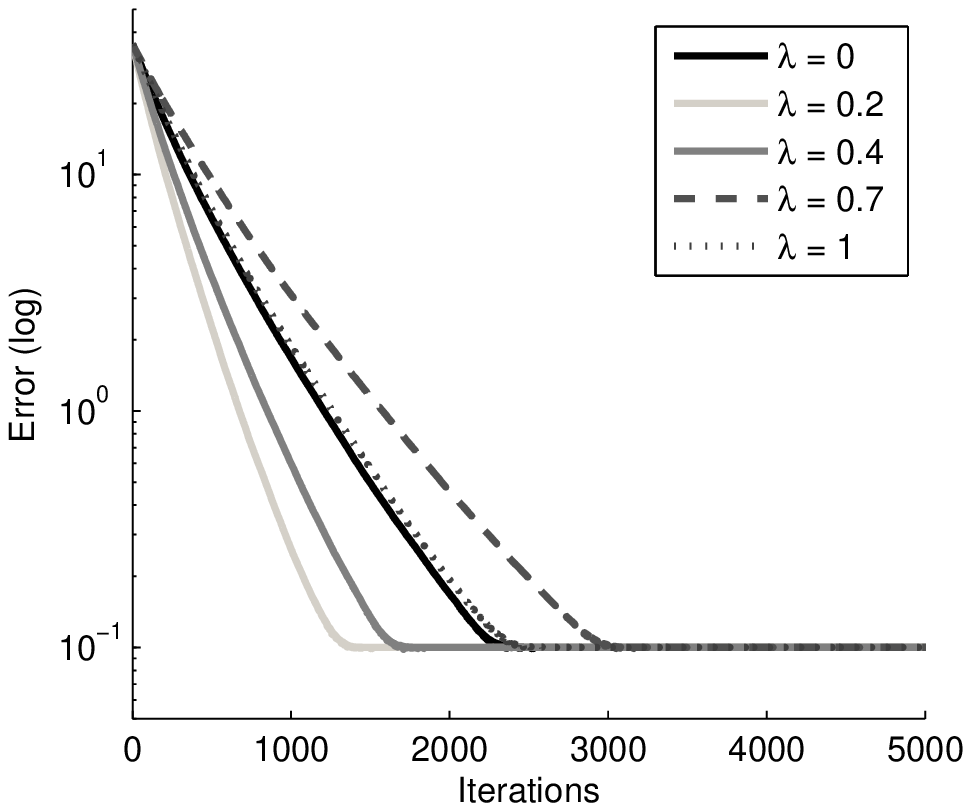} \\
\\
{\bfseries Case 5:} \\
\includegraphics[width=2.5in]{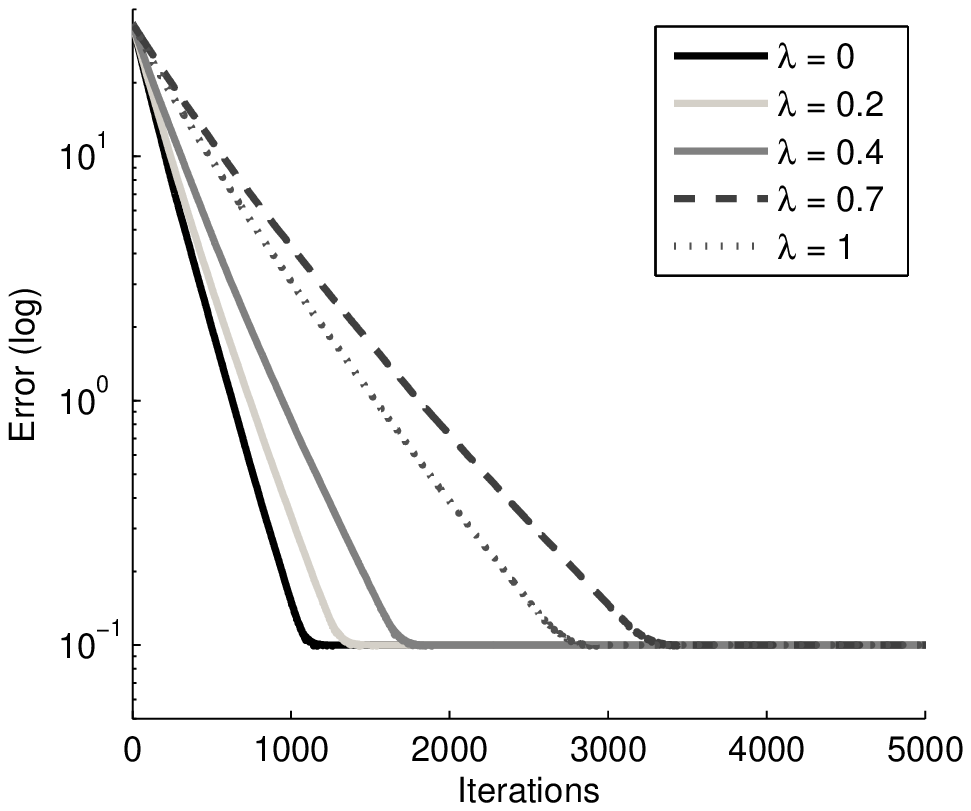} \\
\end{tabular}
\caption{The convergence rates for the randomized Kaczmarz method with various choices of $\lambda$ in the five settings described above.  The vertical axis is in logarithmic scale and depicts the approximation error $\enormsq{\xcur - \xmin}$ at iteration $k$ (the horizontal axis).}\label{fig1}
\end{figure}

Figure~\ref{fig1} shows the convergence behavior of the randomized Kaczmarz method in each of these five settings.  As expected, when the rows of $\A$ are far from normalized, as in Case 1, we see different behavior as $\lambda$ varies from $0$ to $1$.  Here, weighted sampling ($\lambda = 0$) significantly outperforms uniform sampling ($\lambda = 1$), and the trend is monotonic in $\lambda$.  On the other hand, when the rows of $\A$ are close to normalized, as in Case 2, the various $\lambda$ give rise to similar convergence rates, as is expected.   Out of the $\lambda$ tested (we tested increments of $0.1$ from $0$ to $1$), the choice $\lambda = 0.7$ gave the worst convergence rate, and again purely weighted sampling gives the best.  Still, the worst-case convergence rate was not much worse, as opposed to the situation with uniform sampling in Case 1.   Cases 3, 4, and 5 use matrices with  varying row norms and cover ``high", ``medium", and ``low" noise regimes, respectively.    In the high noise regime (Case 3), we find that fully weighted sampling, $\lambda = 0$, is relatively very slow to converge, as the theory suggests, and hybrid sampling outperforms both weighted and uniform selection.   In the medium noise regime (Case 4), hybrid sampling still outperforms both weighted and uniform selection.  Again, this is not surprising, since hybrid sampling allows a balance between small convergence horizon (important with large residual norm) and convergence rate. As we decrease the noise level (as in Case 5), we see that again weighted sampling is preferred. 

\begin{figure}
\begin{center}
\includegraphics[width=3.2in]{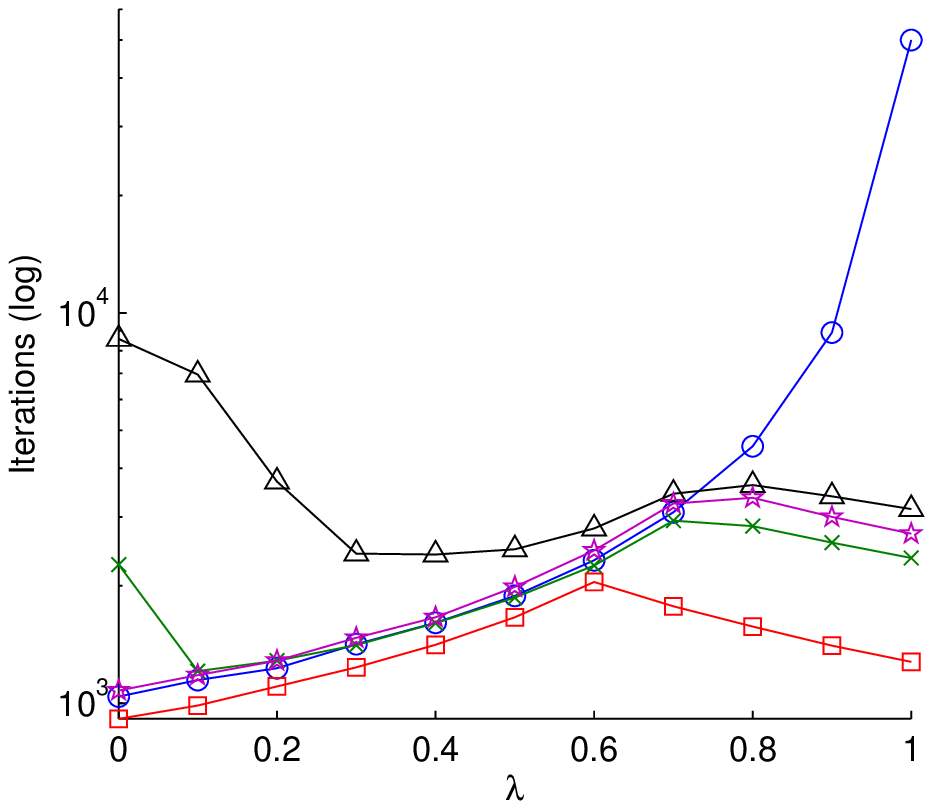}
\end{center}
\caption{Number of iterations $k$ needed by the randomized Kaczmarz method with partially biased sampling, for various values of $\lambda$, to obtain approximation error $\enormsq{\xcur - \xmin} \leq \varepsilon = 0.1$ in the five cases described above: Case 1 (blue with circle marker), Case 2 (red with square marker), Case 3 (black with triangle marker), Case 4 (green with x marker), and Case 5 (purple with star marker).  
}\label{fig2}
\end{figure}

Figure~\ref{fig2} shows the number of iterations of the randomized Kaczmarz method needed to obtain a fixed approximation error.  For the choice $\lambda = 1$ for Case 1, we cut off the number of iterations after 50,000, at which point the desired approximation error was still not attained.  As seen also from Figure~\ref{fig1}, Case 1 exhibits monotonic improvements as we scale $\lambda$.  For Cases 2 and 5, the optimal choice is pure weighted sampling, whereas Cases 3 and 4 prefer intermediate values of $\lambda$.

\section{Summary and outlook}

We consider this paper as making three contributions: the improved
dependence on the conditioning for smooth and strongly convex SGD, the
discussion of importance sampling for SGD, and the connection between
SGD and the randomized Kaczmarz method.

For simplicity, we only considered SGD iterates with a fixed step-size $\gamma$. This is enough for getting the optimal iteration complexity if the target accuracy $\varepsilon$ is known in advance, which was our approach in this paper. It is easy to adapt the analysis, using standard techniques, to incorporate decaying step-sizes, which are appropriate if we don't know $\varepsilon$ in advance.

We suspect that the assumption of strong convexity
can be weakened to \emph{restricted strong convexity} \citep{ly13, zy13} without changing any of the results of this paper; we leave this analysis to future work.

Finally, our discussion of importance sampling is limited to a static
reweighting of the sampling distribution.  A more sophisticated
approach would be to update the sampling distribution dynamically as
the method progresses, and as we gain more information about the relative
importance of components.  Although such dynamic importance sampling
is sometimes attempted heuristically, we are not aware of any rigorous
analysis of such a dynamic biasing scheme.

\section*{Acknowledgements}
We would like to thank the anonymous reviewers for their useful feedback which significantly improved the manuscript.  We would like to thank Chris White for pointing out a simplified proof of Corollary \ref{thm:SGDsteps}.
DN was partially supported by a Simons Foundation Collaboration
  grant, NSF CAREER $\#1348721$ and an Alfred P. Sloan Fellowship. NS was partially supported by a Google Research Award.
	RW was supported in part by ONR Grant N00014-12-1-0743, an AFOSR
  Young Investigator Program Award, and an NSF CAREER award.

\bibliographystyle{plainnat}
\bibliography{rk}

\appendix
\section{Proofs}\label{sec:proofs}
Our main results utilize an elementary fact about smooth functions with Lipschitz continuous gradient, called the co-coercivity of the gradient.  We state the lemma and recall its proof for completeness.
\subsection{The Co-coercivity Lemma}
\begin{lemma}[Co-coercivity]
\label{cocoercivity}
For a smooth function $f$ whose gradient has Lipschitz constant $L$,
$$
\enormsq{\nabla f(\x) - \nabla f(\y)} \leq L\<\x-\y, \nabla f(\x) - \nabla f(\y)\>.
$$
\end{lemma}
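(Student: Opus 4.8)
The plan is to prove this by the classical reduction to a minimization problem, exploiting the convexity of $f$ (which holds throughout our setting; the inequality can in fact fail for nonconvex $f$, as a concave quadratic shows, so convexity is genuinely used). First I would fix $\y$ and introduce the auxiliary function $g(\z) = f(\z) - \ip{\nabla f(\y)}{\z}$. Subtracting a linear term changes neither convexity nor the Lipschitz constant of the gradient, so $g$ is convex with $L$-Lipschitz gradient, and $\nabla g(\z) = \nabla f(\z) - \nabla f(\y)$. In particular $\nabla g(\y) = 0$, so $\y$ is a global minimizer of $g$.

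The key step is to combine this minimization property with the smoothness descent bound. Applying the standard descent inequality for $L$-smooth functions, $g(\z') \le g(\z) + \ip{\nabla g(\z)}{\z' - \z} + \frac{L}{2}\enormsq{\z' - \z}$, at the gradient-step point $\z' = \z - \frac{1}{L}\nabla g(\z)$ gives $g(\z') \le g(\z) - \frac{1}{2L}\enormsq{\nabla g(\z)}$. Since $\y$ minimizes $g$ we have $g(\y) \le g(\z')$, hence $g(\y) \le g(\z) - \frac{1}{2L}\enormsq{\nabla g(\z)}$. Unwinding the definition of $g$ at $\z = \x$ (the linear terms recombine neatly) yields the one-sided inequality $$\tfrac{1}{2L}\enormsq{\nabla f(\x) - \nabla f(\y)} \le f(\x) - f(\y) - \ip{\nabla f(\y)}{\x - \y}.$$

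Finally I would symmetrize. Exchanging the roles of $\x$ and $\y$ produces the companion bound $\tfrac{1}{2L}\enormsq{\nabla f(\x) - \nabla f(\y)} \le f(\y) - f(\x) - \ip{\nabla f(\x)}{\y - \x}$, and adding the two makes the function-value differences $f(\x) - f(\y)$ cancel, leaving $\tfrac{1}{L}\enormsq{\nabla f(\x) - \nabla f(\y)} \le \ip{\nabla f(\x) - \nabla f(\y)}{\x - \y}$; multiplying through by $L$ is exactly the claimed inequality. The only real subtlety is the middle step: recognizing that the linear shift turns $\y$ into the minimizer of $g$ and invoking the descent lemma precisely at the gradient-step point to extract the sharp $\frac{1}{2L}$ gap. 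Everything else is routine rearrangement and the symmetry of the inner product.
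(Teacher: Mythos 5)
Your proof is correct and follows essentially the same route as the paper: shift $f$ by a linear term so that $\y$ (resp.\ $\x$) becomes the minimizer of the auxiliary function, extract the $\tfrac{1}{2L}$ gap at the minimizer, and add the two symmetric inequalities. The only cosmetic difference is that you derive the intermediate bound $\tfrac{1}{2L}\enormsq{\nabla g(\z)} \le g(\z)-g(\y)$ from the descent lemma at the gradient-step point, whereas the paper cites it directly from Nesterov.
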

\begin{proof}
Since $\nabla f$ has Lipschitz constant $L$, if $\xmin$ is the minimizer of $f$, then \citep[see e.g.][page 26]{nes04}
\begin{equation}\label{eq:Lmin}
 \frac{1}{2L}\enormsq{\nabla f(\x)-\nabla f(\xmin)} =  \frac{1}{2L}\enormsq{\nabla f(\x)-\nabla f(\xmin)}  + \< \x - \xmin, \nabla f(\xmin) \>  \leq f(\x) - f(\xmin);
\end{equation}
Now define the convex functions
$$
G(\z) = f(\z) - \<\nabla f(\x), \z\>, \quad\text{and}\quad H(\z) = f(\z) - \<\nabla f(\y), \z\>,
$$
and observe that both have Lipschitz constants $L$ and minimizers $\x$ and $\y$, respectively.  Applying \eqref{eq:Lmin} to these functions therefore gives that
$$
G(\x) \leq G(\y) - \frac{1}{2L}\enormsq{\nabla G(\y)}, \quad\text{and}\quad H(\y) \leq H(\x) - \frac{1}{2L}\enormsq{\nabla H(\y)}.
$$
By their definitions, this implies that
\begin{align*}
f(\x) - \<\nabla f(\x),\x\> &\leq f(\y) - \<\nabla f(\x), \y\> - \frac{1}{2L}\enormsq{\nabla f(\y) - \nabla f(\x)} \\
\vspace{1cm}
f(\y) - \<\nabla f(\y),\y\> &\leq f(\x) - \<\nabla f(\y), \x\> - \frac{1}{2L}\enormsq{\nabla f(\x) - \nabla f(\y)}.
\end{align*}

Adding these two inequalities and canceling terms yields the desired result.

\end{proof}

\subsection{Proof of Theorem~\ref{thm:gen}}
With the notation of Theorem~\ref{thm:gen}, and where $i$ is the
random index chosen at iteration $k$, and $w=w_{\lambda}$, we have
\begin{align*}
\enormsq{\xnext -\xmin} &= \enormsq{\xcur - \xmin - \gamma\nabla f_i(\xcur)}\\
&= \enormsq{(\xcur - \xmin) -  \gamma(\nabla f_i(\xcur) - \nabla f_i(\xmin)) - \gamma\nabla f_i(\xmin)}\\
&= \enormsq{\xcur - \xmin} - 2 \gamma\<\xcur-\xmin, \nabla f_i(\xcur)\> + \gamma^2\enormsq{\nabla f_i(\xcur) - \nabla f_i(\xmin) + \nabla f_i(\xmin)}\\
&\leq \enormsq{\xcur - \xmin} - 2 \gamma\<\xcur-\xmin, \nabla f_i(\xcur)\> + 2\gamma^2\enormsq{\nabla f_i(\xcur) - \nabla f_i(\xmin)} + 2\gamma^2\enormsq{\nabla f_i(\xmin)} \\
&\leq \enormsq{\xcur - \xmin} - 2 \gamma\<\xcur-\xmin, \nabla f_i(\xcur)\> \\
& \hspace{5mm} + 2\gamma^2 L_i \<\xcur-\xmin, \nabla f_i(\xcur) - \nabla f_i(\xmin)\> + 2\gamma^2\enormsq{\nabla f_i(\xmin)},
\end{align*}
where we have employed Jensen's inequality in the first inequality and
the co-coercivity Lemma \ref{cocoercivity} in the final line.  We next
take an expectation with respect to the choice of $i$.  By assumption, $i\sim\DD$ such that $F(\x) = \mathbb{E} f_i(\x)$ and $\sigma^2 = \E \| \nabla f_i(\xmin)\|^2$.   
Then $\E\nabla f_i(\x)=\Fradg(\x)$, and we obtain:
\begin{align*}
\E\enormsq{\xnext -\xmin} &\leq \enormsq{\xcur - \xmin} 
- 2\gamma \<\xcur-\xmin, \Fradg(\xcur)\> 
+ 2\gamma^2 \E \left[ L_i
\<\xcur-\xmin, \nabla f_i(\xcur) - \nabla f_i(\xmin)\>\right] \nonumber\\
&\quad\quad+ 2\gamma^2 \E\enormsq{\nabla f_i(\xmin)} \\
&\leq \enormsq{\xcur - \xmin} 
- 2\gamma \<\xcur-\xmin, \Fradg(\xcur)\> 
+ 2\gamma^2 \sup_i L_i \E \<\xcur-\xmin, \nabla f_i(\xcur) - \nabla f_i(\xmin)\> \nonumber \\ 
&\quad\quad+ 2\gamma^2  \E\enormsq{\nabla f_i(\xmin)} \\
&=  \enormsq{\xcur - \xmin} - 2\gamma \<\xcur-\xmin, \Fradg(\xcur)\> +
2\gamma^2 \sup L  \<\xcur-\xmin, \Fradg(\xcur) - \Fradg(\xmin)\> +
2\gamma^2  \sigma^2\\
\intertext{
  We now utilize the strong convexity of $\F(\x)$ and obtain that}
&\leq \enormsq{\xcur - \xmin} - 2\gamma \mu(1 - \gamma \sup L) \enormsq{\xcur-\xmin} + 2\gamma^2 \sigma^2 \\
&= (1 - 2\gamma \mu(1 - \gamma \sup L)) \enormsq{\xcur-\xmin} + 2\gamma^2 \sigma^2 \\
\end{align*}
 when $\gamma \leq \frac{1}{ \sup L}$.
Recursively applying this bound over the first $k$ iterations yields the desired result,
\begin{align*}
\E\enormsq{\xcur -\xmin} &\leq \Big( 1 - 2\gamma \mu(1 - \gamma \sup L) \Big) \Big)^k\enormsq{\xo - \xmin} + 2\sum_{j=0}^{k-1} \Big( 1 - 2\gamma \mu(1 - \gamma \sup L) \Big) \Big)^j \gamma^2  \sigma^2 \\
&\leq \Big( 1 - 2\gamma \mu(1 - \gamma \sup L) \Big) \Big)^k\enormsq{\xo - \xmin} + \frac{\gamma  \sigma^2}{\mu\big( 1 - \gamma \sup L \big)}.
\end{align*}

\appendix

\end{document}